\numberwithin{equation}{section}
\newtheorem{theorem}{Theorem}
\newtheorem{lemma}{Lemma}
\newtheorem{corollary}{Corollary}
\newtheorem{proposition}{Proposition}
\newtheorem{remark}{Remark}
\newtheorem{definition}{Definition}
\numberwithin{theorem}{section}
\numberwithin{corollary}{section}
\numberwithin{lemma}{section}
\numberwithin{definition}{section}
\numberwithin{proposition}{section}
\numberwithin{remark}{section}
\newcommand{\R}{\mathbb R}
\newcommand{\N}{\mathbb N}
\newcommand{\medint}{-\kern  -,375cm\int}
\newcommand{\norm}[1]{\left\Vert#1\right\Vert}
\begin{document}
\title[ ]{Symmetry and asymmetry of minimizers \\ of a class of noncoercive functionals}
\author{F. Brock,  G. Croce, O. Guib\'e, A. Mercaldo }
\thanks{}
\date{}

\begin{abstract}
In this paper we prove symmetry results for  minimizers of a non coercive functional
defined on the class of Sobolev  functions  with zero mean value. We prove that the minimizers  are foliated
Schwarz symmetric, i.e. they are axially symmetric with respect to an axis passing through the
origin and nonincreasing in the polar angle from this axis. 
In the two dimensional case we show a symmetry breaking.
\end{abstract}

\maketitle

\setcounter{footnote}{1} \footnotetext{%
Leipzig University, Department of Mathematics, Augustusplatz, 04109 Leipzig,
Germany, e-mail: brock@math.uni-leipzig.de}

\setcounter{footnote}{2} \footnotetext{%
Normandie Univ, France; ULH, LMAH, F-76600 Le Havre; FR CNRS 3335, 25 rue Philippe Lebon 76600 Le Havre, France, e-mail: gisella.croce@univ-lehavre.fr}

\setcounter{footnote}{3} \footnotetext{%
  Laboratoire de Math\'ematiques Rapha\"el CNRS -- Universit\'e de Rouen, 
Avenue de l'Universit\'e, BP.12,
76801 Saint-\'Etienne du Rouvray, 
France, e-mail: olivier.guibe@univ-rouen.fr}

\setcounter{footnote}{4} \footnotetext{%
Dipartimento di Matematica e Applicazioni \textquotedblleft R.
Caccioppoli\textquotedblright , Universit\`{a} degli Studi di Napoli
``Fe\-derico II", Complesso Monte S. Angelo, via Cintia, 80126 Napoli,
Italy, e-mail: mercaldo@unina.it}

\section{Introduction}

Consider the functional
$$
v \in H^1_0(\Omega)\to \frac 12 \int_{\Omega}|\nabla v|^2
$$
subjected to the constraint $\displaystyle \int_{\Omega} v^2=1$, where $\Omega$ is the unit ball in the plane. Its critical values are the eigenvalues of 
 the classical fixed membrane  problem
\begin{equation}\label{membrane_problem}
\begin{cases}
-\Delta u=\lambda u,& \mbox{in}\, \,\Omega\,,
\\
u=0,&  \mbox{on}\,\,\partial\Omega\,.
\end{cases}
\end{equation}
It is known that  the first eigenfunctions are positive and Schwarz symmetric, 
that is, radial and decreasing in the radial variable.
On the contrary, the second eigenfunctions are sign-changing; they are not radial, but
they are  symmetric with respect to the reflection at some line $\R e$, and they are decreasing in the angle $\arccos[\frac{x}{|x|}\cdot e]\in (0,\pi)$. 
These
properties can be seen as a spherical version of the Schwarz symmetry along the foliation
of the underlying ball $\Omega$ by circles. For this reason, this  property has been called foliated Schwarz symmetry in the literature.

In the last years much interest has been devoted to the shape of sign changing minimizers of integral functionals, see for example \cite{Girao-Weth}, \cite{Weth}, \cite{BDNT}
and \cite{Parini-Weth}. 
In \cite{Girao-Weth}, Girao and Weth studied the symmetry properties of the minimizers of the problem
\begin{equation}\label{girao-weth}
v\to \frac{\norm{\nabla v}_{2}}{\norm{v}_{p}}, \quad v \in H^1(\Omega)\,,\,\,\,\,\,\,\,\int_{\Omega} v=0
\end{equation}
for $2\leq p<2^*$. In view of the zero average constraint, (\ref{girao-weth}) is similar to the problem of finding the second eigenfunctions of problem (\ref{membrane_problem}).
They proved that the minimizers are foliated Schwarz symmetric.

In \cite{Girao-Weth} Girao and Weth   pointed out another interesting phenomenon related to the shape of the minimizers of  (\ref{girao-weth}).
If $p$ is close to 2, then any minimizer of the above functional  is antisymmetric with respect to the reflection
at the hyperplane $\{x\cdot e=0\}$.
In contrast to this, the minimizers are not antisymmetric when $N=2$ and $p$ is
sufficiently large.
A similar break of symmetry was already observed  in 
\cite{DGS}, \cite{CD}, \cite{K},
\cite{BK}, \cite{BKN}, \cite{N} for the minimizers of
the functional
\[
v\to \frac{\norm{v'}_{L^p(0,1)}}{\norm{v}_{L^q(0,1)}}\,,\quad v \in W^{1,p}((0,1)), v(0)=v(1), \quad \int_0^1 v=0\,. 
\]
 Indeed, it has been shown that any minimizer is an antisymmetric function, if and only if $q\leq 3p$.

In this paper, we will prove similar symmetry results for the minimizers of a generalized version of the functional studied by Girao and Weth in \cite{Girao-Weth}. 
We consider
\begin{equation}\label{p0}
\lambda^{\theta,p}(\Omega)=\hbox{inf} 
\left\{
{\int_{\Omega}
\frac{|\nabla v|^2}{(1+|v|)^{2\theta}}} \,dx,\,\, 
v\in W^{1,q}(\Omega), \, v\neq 0 \\, \int_{\Omega}v\, dx=0, \, \norm{v}_{L^p(\Omega)}=1 
\right\}
\end{equation}
where $\Omega$ is either a ball or an annulus centered in the origin in $\R^N$, $N\geq 2$,  $\theta$ and $q$ satisfy 
\begin{gather}\label{teta}
0<2\theta<1\, ,
\\
\label{q3}
q=\frac{2N(1-\theta)}{N-2\theta}\,, \qquad \hbox{ if }  N\geq 3\,, 
\\
\label{q2}
2(1-\theta)\le q<2\, ,  \qquad   \hbox{ if }   N=2\,,  
\\
\label{qp}
1< p<q^* \qquad \text{ if } N\geq 3\,,
\\
\label{qpa}
1<p<+\infty \qquad\text{ if } N=2\,.
 \end{gather}
Observe that, if one defines
$$
 \Psi (\xi ) :=  
 \int_0 ^{\xi } (1+|t|)^{-\theta } dt = 
 \frac{\mbox{sgn}\, \xi }{1-\theta } [ (1+|\xi |)^{1-\theta } -1],\qquad \xi\in \R
$$
then our functional is the integral of 
$
 |\nabla \Psi(u)|^2 $, that is, \eqref{p0} is equivalent to 
 \begin{equation}\label{pnew}
\lambda^{\theta,p}(\Omega)=\hbox{inf} 
\left\{
{\int_{\Omega}
 |\nabla \Psi(v)|^2} \,dx,\,\, 
v\in W^{1,q}(\Omega), \, v\neq 0 \\, \int_{\Omega}v\, dx=0, \, \norm{v}_{L^p(\Omega)}=1 
\right\}
\end{equation}

The main feature of this functional is that it is not coercive on $H^1_0(\Omega)$, even if it is well defined on this Sobolev space. The lack of coercivity  has unpleasant consequences for the minimizers  
of  
\[
\displaystyle v\to \int_{\Omega}
\left[\frac{|\nabla v|^2}{(1+|v|)^{2\theta}}-G(x,v)\right] \,dx
\]
for functions $G$ having various growth assumptions.
Indeed, it was shown in 
\cite{BO},
\cite{ABO2001},
\cite{Mercaldo2003},
\cite{Faraci2004},
\cite{Porretta2007},
\cite{BCOLincei}, 
that the minimizers are  less regular than the minimizers of coercive functionals on $H^1(\Omega)$.

After recalling the definition of  foliated  Schwarz symmetry and proving some new sufficient conditions for this symmetry  in Section \ref{section_definition_symmetry}, 
we will prove the
foliated  Schwarz symmetry of the minimizers for $N\geq 2$.  As already pointed out, the same result has been obtained  by Girao and Weth in \cite{Girao-Weth} in the ``coercive" case, that is, 
for $\theta=0$.
We observe that in their proof, Girao and Weth make use of a well-known regularity result of the solutions of the Euler equation. In our case, we have to prove the analogous  regularity result for our  non coercive functional (see Section \ref{section_foliated_symmetry}).  Actually we are able to prove the
foliated  Schwarz symmetry of the minimizers of a more general functional, that is we consider 
\begin{equation}\label{p}
\lambda^{\theta,p}(\Omega)=\hbox{inf} 
\left\{
{\int_{\Omega}
\frac{|\nabla v|^2-F(|x|,v)}{(1+|v|)^{2\theta}}} \,dx,\,\, 
v\in W^{1,q}(\Omega), \, v\neq 0 \\, \int_{\Omega}v\, dx=0, \, \norm{v}_{L^p(\Omega)}=1 
\right\}
\end{equation}
where we assume that 
 $F:\mathbb{R}^{+} \times \mathbb{R}\rightarrow \mathbb{R}$ is a measurable
 function in $r=|x|\in [0,+\infty)$  and continuously differentiable in $t\in \mathbb{R}$,  which satisfies 
 \begin{equation}\label{F0}
F(r,0)=0\,,
\end{equation}
and the growth conditions  
\begin{gather}
  \label{growth}
|F(r,t)|\leq c(1+|t|)^p\,,\qquad c>0\,,
\\
\label{growthFt}
|F_t(r,t)|\le C_1(1+|t|)^{p-1} \,,\qquad C_1>0\,,
\end{gather}
for any $r\in [0,+\infty)$, $t\in \R$.

\noindent If $p\in (1,2)$, we add the requirement  
\begin{equation}
\label{condF} 
t(1+|t|) F_t (r,t ) - 2\theta |t| F(r,t ) \leq 0\,,
\end{equation}
for any $r\in [0,+\infty)$, $t\in \R$.

In the last two sections we will focus on the two-dimensional setting, in the case where $\Omega$ is a ball. 
We will prove that there exists a unique minimizer, which is  anti-symmetric, for $p=2$ and sufficiently small $\theta$.
On the contrary, the minimizers are not anti-symmetric  for $p$ sufficiently large. 
This shows a symmetry breaking phenomenon, which generalizes
the results proved by Girao and Weth in the case $\theta=0$. 
Note that because of the difficulty given by the lack of coercivity of our functional, our technique is quite different from that one of \cite{Girao-Weth}.

\section{Existence of a minimizer}
In this section we prove the existence
 of a minimizer for problem \eqref{p} by adapting the technique of \cite{DGS}. We will also make use of an estimate proved in  \cite{BO} (see also \cite{ABFOT} and \cite{BDO}).
\begin{theorem}\label{existence} 
Under the assumptions \eqref{teta}-\eqref{qpa}, \eqref{F0}-\eqref{condF},
there exists a minimizer $u$ which realizes $\lambda^{\theta,p}(\Omega)$, as defined in \eqref{p}.
\end{theorem}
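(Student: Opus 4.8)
The plan is to remove the lack of coercivity by the change of unknown $w=\Psi(v)$ already used to pass from \eqref{p0} to \eqref{pnew}, and then to apply the direct method of the calculus of variations to the transformed problem. By \eqref{teta} the function $\Psi$ is an increasing $C^1$-diffeomorphism of $\R$ with $0<\Psi'(\xi)=(1+|\xi|)^{-\theta}\le1$ and $|\Psi(\xi)|\le C(1+|\xi|^{1-\theta})$; its inverse $\Phi:=\Psi^{-1}$ satisfies $\Phi'(s)=(1+|\Phi(s)|)^{\theta}$ and $|\Phi(s)|\le C(1+|s|^{1/(1-\theta)})$. For $v\in W^{1,q}(\Omega)$ the chain rule gives $\nabla\Psi(v)=(1+|v|)^{-\theta}\nabla v$, so that, writing $\widetilde F(r,s):=F(r,\Phi(s))(1+|\Phi(s)|)^{-2\theta}$, the functional in \eqref{p} evaluated at $v$ equals $\int_\Omega\big(|\nabla\Psi(v)|^2-\widetilde F(|x|,\Psi(v))\big)\,dx$; by \eqref{growth} its lower order part is bounded on the constraint set, and, using $q\ge2(1-\theta)$ (true by \eqref{q3}, \eqref{q2}), an admissible $v$ has finite energy precisely when $\Psi(v)\in H^1(\Omega)$. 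Conversely, if $w\in H^1(\Omega)$ then $\Phi(w)\in W^{1,q}(\Omega)$: the Sobolev embedding gives $\Phi(w)\in L^{q^*}(\Omega)$ for $N\ge3$ (and $\Phi(w)\in L^m(\Omega)$ for every $m<\infty$ if $N=2$), and Hölder's inequality with exponents satisfying $\tfrac1q=\tfrac12+\tfrac{\theta}{q^*}$ — which is exactly how \eqref{q3}, \eqref{q2} have been chosen — gives $\nabla\Phi(w)=(1+|\Phi(w)|)^{\theta}\nabla w\in L^q(\Omega)$. Hence \eqref{p} is equivalent to minimizing $\widetilde J(w):=\int_\Omega\big(|\nabla w|^2-\widetilde F(|x|,w)\big)\,dx$ over $w\in H^1(\Omega)$ with $\int_\Omega\Phi(w)\,dx=0$ and $\norm{\Phi(w)}_{L^p(\Omega)}=1$, and a minimizer $w$ of the latter yields the minimizer $u=\Phi(w)$ of \eqref{p}.

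Next I would run the direct method on the transformed problem. First, $\lambda^{\theta,p}(\Omega)<+\infty$, by testing with a bounded Lipschitz function of zero mean rescaled to unit $L^p$ norm, which also shows the constraint set is nonempty. Let $(w_n)\subset H^1(\Omega)$ be a minimizing sequence and put $v_n:=\Phi(w_n)$, so $\int_\Omega v_n\,dx=0$ and $\norm{v_n}_{L^p(\Omega)}=1$. Since $(1+|v_n|)^{-2\theta}\le1$ and $|F(r,t)|\le c(1+|t|)^p$, the lower order term obeys $\int_\Omega|\widetilde F(|x|,w_n)|\,dx\le c\int_\Omega(1+|v_n|)^p\,dx\le c\,2^{p-1}(|\Omega|+1)$; this shows $\lambda^{\theta,p}(\Omega)>-\infty$ and, together with $\widetilde J(w_n)\le\lambda^{\theta,p}(\Omega)+1$, gives $\int_\Omega|\nabla w_n|^2\,dx\le C$. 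On the other hand $|w_n|\le C(1+|v_n|^{1-\theta})$ forces $\norm{w_n}_{L^{p/(1-\theta)}(\Omega)}\le C$, and finitely many Sobolev bootstrap steps then show that $(w_n)$ is bounded in $H^1(\Omega)$. Up to a subsequence, $w_n\rightharpoonup w$ in $H^1(\Omega)$, $w_n\to w$ in $L^r(\Omega)$ for every $r<2^*$ (every $r<\infty$ if $N=2$) by the Rellich--Kondrachov theorem, and $w_n\to w$ a.e.; consequently $v_n=\Phi(w_n)\to v:=\Phi(w)$ a.e.

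It then remains to check that $v$ is admissible and to pass to the limit in the energy. From $|v_n|\le C(1+|w_n|^{1/(1-\theta)})$ and the $L^{2^*}$-bound on $(w_n)$, the sequence $(v_n)$ is bounded in $L^{q^*}(\Omega)$ (in every $L^m(\Omega)$, $m<\infty$, if $N=2$); since $p<q^*$ by \eqref{qp} (resp. $p<\infty$ by \eqref{qpa}), the family $(|v_n|^p)$ is equi-integrable, so $v_n\to v$ in $L^p(\Omega)$ by Vitali's theorem. Hence $\norm{v}_{L^p(\Omega)}=1$ — in particular $v\neq0$ — and, since $L^p(\Omega)\hookrightarrow L^1(\Omega)$, $\int_\Omega v\,dx=0$. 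Moreover $\widetilde F(|x|,w_n)\to\widetilde F(|x|,w)$ a.e. by the continuity of $F(r,\cdot)$ and $\Phi$, while $|\widetilde F(|x|,w_n)|\le c(1+|v_n|)^p$ is equi-integrable, so $\int_\Omega\widetilde F(|x|,w_n)\,dx\to\int_\Omega\widetilde F(|x|,w)\,dx$ by Vitali again. Together with the weak $L^2(\Omega)$-lower semicontinuity of $z\mapsto\int_\Omega|\nabla z|^2\,dx$ this gives $\widetilde J(w)\le\liminf_n\widetilde J(w_n)=\lambda^{\theta,p}(\Omega)$, so $w$ minimizes $\widetilde J$ on the constraint set and $u:=\Phi(w)$ realizes $\lambda^{\theta,p}(\Omega)$.

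The one real obstacle is precisely the non-coercivity stressed in the introduction: $|\nabla v|^2/(1+|v|)^{2\theta}$ is not coercive on $H^1_0(\Omega)$, so a minimizing sequence for \eqref{p} need not be bounded in any Sobolev space. The substitution $w=\Psi(v)$ is what resolves this — it replaces the degenerate Dirichlet-type integral by the plain Dirichlet integral $\int_\Omega|\nabla w|^2$, which is coercive, once $q$ has been tuned as in \eqref{q3}, \eqref{q2} so that $\Phi$ carries $H^1(\Omega)$ into $W^{1,q}(\Omega)$ and, on finite-energy elements, $\Psi$ carries $W^{1,q}(\Omega)$ back into $H^1(\Omega)$ — but at the cost of turning the two linear constraints in \eqref{p} into constraints that are nonlinear in $w$. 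The only delicate step is then the passage to the limit in $\int_\Omega\Phi(w)\,dx=0$ and $\norm{\Phi(w)}_{L^p(\Omega)}=1$, which is exactly where the growth bound \eqref{growth} and the subcriticality \eqref{qp}--\eqref{qpa} are needed (to rule out loss of mass and concentration). The remaining hypotheses \eqref{F0}, \eqref{growthFt} and, when $p\in(1,2)$, \eqref{condF}, appear not to be needed for the existence argument itself; they are part of the standing assumptions under which the regularity and symmetry statements of the next sections are proved.
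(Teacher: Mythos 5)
Your argument is correct, and it rests on the same mechanism as the paper's proof — the substitution $w=\Psi(v)$, which turns the degenerate quadratic form into the genuine Dirichlet energy $\int_\Omega|\nabla w|^2$, together with the subcriticality $p<q^*$ (resp.\ $p<\infty$) to pass the two constraints to the limit. The organisation is, however, noticeably different from the paper's. The paper works with the original minimizing sequence $u_n\in W^{1,q}(\Omega)$ and first proves the a priori bound $\|\nabla u_n\|_{L^q(\Omega)}\le C$ via a H\"older/Sobolev computation (done separately for $N\ge3$ and $N=2$, and using the algebraic identity $\tfrac{2\theta q}{2-q}=q^*$), obtaining weak $W^{1,q}$-compactness; the transformed functions $\Psi(u_n)$ enter only as an auxiliary device, and become essential only for $N=2$ where $W^{1,q}$-compactness alone does not control the $L^p$-constraint. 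You instead push the substitution through the entire proof, recasting \eqref{p} as a coercive problem on $H^1(\Omega)$ with nonlinear constraints; the single $H^1$-bound on $w_n=\Psi(v_n)$ then handles both dimensions at once, and membership of the limit in $W^{1,q}(\Omega)$ is recovered afterwards from the growth of $\Phi$ and H\"older's inequality. This is cleaner and avoids the $W^{1,q}$ gradient estimate entirely. One minor remark: the phrase ``finitely many Sobolev bootstrap steps'' to get $w_n$ bounded in $L^2(\Omega)$ is an overstatement — a single application of the Sobolev--Poincar\'e inequality $\|w_n-\bar w_n\|_{L^{2^*}(\Omega)}\le C\|\nabla w_n\|_{L^2(\Omega)}$, combined with the bound $|\bar w_n|\le C\|w_n\|_{L^{p/(1-\theta)}(\Omega)}\le C$ coming from the constraint $\|v_n\|_{L^p(\Omega)}=1$, already gives $\|w_n\|_{L^{2^*}(\Omega)}\le C$ and hence the $H^1$-bound. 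You are also right that \eqref{F0}, \eqref{growthFt} and \eqref{condF} play no role in the existence argument; the paper likewise does not use them here.
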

\begin{proof}
We first observe that the growth assumption \eqref{growth} on
$F$ and the condition $\|u\|_{L^{p}(\Omega)}=1$ in the functional
imply that $\lambda^{\theta,p}\in\mathbb{R}$. For any fixed  $n\in \N$, let us define
\begin{gather*}
\displaystyle H_{n}(v)=\int_{\Omega}\frac{|\nabla v|^2-F(|x|,v)}{(1+|v|)^{2\theta}}\, dx
-\left(\lambda^{\theta,p}(\Omega)+\frac 1n\right),
\\
\displaystyle H_{\infty}(v)=\int_{\Omega}\frac{|\nabla v|^2-F(|x|,v)}{(1+|v|)^{2\theta}}\, dx
-\lambda^{\theta,p}(\Omega)
\end{gather*}
for any $v\in W^{1,q}(\Omega)$ such that $ v\neq 0,
\norm{v}_{L^p(\Omega)}=1$ and $\displaystyle \int_\Omega v=0$. By the definition of infimum, for any fixed $n\in \N$ there exists $u_n\in W^{1,q}(\Omega)$, $ u_n\neq 0,$
such that
\begin{equation}\label{Hn}
\norm{u_n}_{L^p(\Omega)}=1, \,\,\,\,\,\,\,\,\,\,\,\,\,\,\displaystyle
\int_{\Omega}u_n \,dx=0\,,\,\,\,\,\,\,\,\,\,\,\,\,H_{n}(u_n)<0\,.
\end{equation}
Now, by the growth assumption \eqref{growth} on $F$, since the functions $u_n$ have $L^p-$norm equal to 1,  we have 
\begin{equation}\label{Fn}
\int_{\Omega} \frac{|F(|x|,u_n)|}{(1+|u_n|)^{2\theta}}\, dx\le C\,,
\end{equation}
where $C$ is a positive constant which does not depend on $n$.  
 
 \noindent From now on we will denote by $C$ a positive constant which depends on the data and which can vary from line to line.

\noindent Since $H_{n}(u_n)<0$,  estimates (\ref{Hn}) and (\ref{Fn}) imply that
\begin{equation}\label{stima1}
\int_{\Omega}\frac{|\nabla u_n|^2}{(1+|u_n|)^{2\theta}}\, dx\le C\,.
\end{equation}
Now we  prove that $|\nabla u_n|$ is bounded in $L^{q}(\Omega)$, that is, for any $n\in \N$,
 \begin{equation}\label{stimagrad}
\|\nabla u_n\|_{L^{q}(\Omega)}\le C.
\end{equation}

\noindent We adapt the estimate used in    Theorem 2.1 of \cite{BO} and we distinguish the case where $N\ge 3$ and the case $N=2$.

\noindent Let $N\geq 3$ with $q=\frac{2N(1-\theta)}{N-2\theta}$.
 We begin by applying   the H\"older inequality since $q<2$; then we use
 estimate \eqref{stima1} and,  since the mean value of $u_n$ is null,
 by the  Sobolev inequality, we get
\begin{eqnarray*}
\int_\Omega |\nabla u_n|^q\, dx\le	&&  \left (\int_\Omega   \frac{|\nabla u_n|^2}{(1+|u_n|)^{2\theta}}
	 \, dx  \right )^\frac{q}{2}
	\left ( \int_\Omega (1+|u_n|)^{\frac{2\theta q}{2-q}}  \, dx    \right)^{1-\frac q2}\\
\notag&\\
\notag	\le &&
C \left (\int_\Omega   \frac{|\nabla u_n|^2}{(1+|u_n|)^{2\theta}}
	 \, dx  \right )^\frac{q}{2}
	\left (1+ \int_\Omega |u_n|^{q^*}  \, dx    \right)^{1-\frac q2}\\
	\notag&\\
\notag	\le &&
C \left (1+ \int_\Omega |\nabla u_n|^{q}  \, dx    \right)^{\frac{q^*}{q}(1-\frac q2)}
\end{eqnarray*}
where we have used the equality $\frac{2\theta q}{2-q}=q^*$. Since $N\ge 3$, we deduce that $\frac{q^*}{q}(1-\frac q2)<1$ and
\eqref{stimagrad} is proved.

\noindent Let $N=2$. Similarly to above, by using the H\"older inequality, 
estimate \eqref{stima1}, the inclusion  $L^{\frac{2 q}{2-q}}(\Omega)\subset
L^{\frac{2 q\theta}{2-q}}(\Omega)$ and the Sobolev inequality, we get
\begin{eqnarray*}
\int_\Omega |\nabla u_n|^q\, dx\le	&&  \left (\int_\Omega   \frac{|\nabla u_n|^2}{(1+|u_n|)^{2\theta}}
	 \, dx  \right )^\frac{q}{2}
	\left ( \int_\Omega (1+|u_n|)^{\frac{2\theta q}{2-q}}  \, dx    \right)^{1-\frac q2}\\
\notag&\\
\notag	\le &&C
\left (\int_\Omega   \frac{|\nabla u_n|^2}{(1+|u_n|)^{2\theta}}
	 \, dx  \right )^\frac{q}{2}
	\left (1+ \int_\Omega |u_n|^{\frac{2q}{2-q}}  \, dx    \right)^{(1-\frac q2)\theta}\\
	\notag&\\
\notag	\le &&
C \left (1+ \int_\Omega |\nabla u_n|^{q}  \, dx    \right)^{\theta}.
\end{eqnarray*}
Since $\theta<1$, \eqref{stimagrad} follows again.

\noindent By  the Poincar\'e-Wirtinger inequality, since the mean value of $u_n$ is zero,
we deduce by \eqref{stimagrad} that 
\begin{equation}\label{unboudedinWq}
u_n \quad\hbox{is bounded in
   $W^{1,q}(\Omega)$}
\end{equation}
and therefore there exists a function $u\in W^{1,q}(\Omega)$ such that, as $n$ goes to $\infty$, up to a subsequence,
\begin{equation}
   \label{convg}
u_n   \longrightarrow u \quad\hbox{in
   $W^{1,q}(\Omega)$ weakly, }
\end{equation}
\begin{equation}
   \label{convu}
u_n\longrightarrow u \quad\hbox{in
   $L^{r}(\Omega)$, $\quad 1\le r<q^*$\,,}
\end{equation}
\begin{equation}
   \label{convqo}
   u_n\longrightarrow u \quad\hbox{a.e. in $\Omega$}.
\end{equation}

\noindent Let $N\geq 3$ with $q=\frac{2N(1-\theta)}{N-2\theta}$. Note that (\ref{convu}), since $p<q^*$, implies that 
\[
\displaystyle \int_{\Omega}u dx=0\,\,,\,\,\,\,\,\,\,\,\,\norm{u}_{L^p(\Omega)}=1.
\]
We claim that
\begin{equation}\label{Hinfty}
H_{\infty}(u)\leq 0.
\end{equation}
Let
\begin{equation}\label{defnPsi}
 \Psi (\xi ) :=  
 \int_0 ^{\xi } (1+|t|)^{-\theta } dt = 
 \frac{\mbox{sgn}\, \xi }{1-\theta } [ (1+|\xi |)^{1-\theta } -1],\qquad \xi\in \R
\end{equation}
and observe that, by \eqref{stima1}, 
$$
\int_\Omega |\nabla \Psi(u_n)|^2 \,dx= \int_{\Omega}\frac{|\nabla u_n|^2}{(1+|u_n|)^{2\theta}}\, dx\le C.
$$
Moreover, $\Psi(u_n)$ is bounded in $L^2(\Omega)$,  since $2(1-\theta)\leq q$ and $u_n$ is bounded in $L^q(\Omega)$ by (\ref{unboudedinWq}).
We infer that $\Psi(u_n)$ is bounded in $W^{1,2}(\Omega)$ and, up to a subsequence,
$
\Psi(u_n)$ converges
 weakly in
  $W^{1,2}(\Omega)$ to a limit which is necessarily $
\Psi(u)$, by (\ref{convqo}).
Therefore, by the weak semi-continuity of the norm and  inequality in  (\ref{Hn}),  as  
$n$ goes to $\infty$, up to a subsequence,
$$
\|\Psi(u)\|^2_{W^{1,2}}\le \liminf_{n\to +\infty} \|\Psi(u_n)\|^2_{W^{1,2}}\le 
\lim_{n\to +\infty} \int _\Omega \frac{
F(|x|,u_n)}{(1+|u_n|)^{2\theta}}\, dx
+\lim_{n\to +\infty}\left(\lambda^{\theta,p}(\Omega)+\frac 1n\right)
\,.
$$
To pass to the limit in the first term of the right hand side, one can use the Lebesgue theorem. Indeed, the pointwise convergence is given by (\ref{convqo}). 
The growth assumptions \eqref{growth} on $F$ and \eqref{convu}, since $p< q^*$, imply the existence of a function $h\in L^p(\Omega)$
such that 
\[
\frac{
|F(|x|,u_n)|}{(1+|u_n|)^{2\theta}}\leq h(x) \qquad \hbox{ a.e. in }\Omega\,.
\]
Finally we get
\[
\int_{\Omega}\frac{|\nabla u|^2}{(1+|u|)^{2\theta}}\, dx\le \int _\Omega \frac{
F(|x|,u)}{(1+|u|)^{2\theta}}\, dx+\ \lambda^{\theta,p}(\Omega) \,,
\]
that is, \eqref{Hinfty} holds.
By the definition of $\lambda^{\theta,p}(\Omega)$, necessarily we have
$
H_{\infty}(u)=0.
$
We observe that $\Psi(u)\neq 0$, since $\norm{u}_{L^p(\Omega)}=1$.
\par
It remains to conclude the proof in the case $N=2$. Indeed when $N=2$
 we have $1<p<+\infty$ (see \eqref{qpa}) and $2(1-\theta)\leq q <2$
 (see \eqref{q2}), so that the convergences \eqref{convg},
 \eqref{convu} and \eqref{convqo} do not imply, in general, that $\|u\|_{L^{p}(\Omega)}=1$.
However in view of \eqref{stima1} and since $2(1-\theta)\leq q$ we
obtain that $\Psi(u_{n})$ is bounded in $W^{1,2}(\Omega)$. From the
Sobolev embedding theorem it follows that $\Psi(u_{n})$ is bounded in
$L^{r}(\Omega)$ for any $1\leq r<+\infty$. Since $\Psi(\xi)$ growths
like $|\xi|^{1-\theta}$ with $1-\theta>0$, we conclude that $u_{n}$ is
bounded in $L^{r}(\Omega)$ for any $1\leq r<+\infty$. We
obtain that $\|u\|_{L^{p}(\Omega)}=1$ and the arguments developed in
the case $N=3$ allow us to conclude that $H_{\infty}(u)=0$.
\end{proof}



\section{Identification of symmetry}\label{section_definition_symmetry}

In this section we generalize some known symmetry criteria (cf. \cite{BS}). 
We first introduce some notation and definitions.
Let $\Omega $ be a domain that is radially symmetric w.r.t.  the origin.
In other words, $\Omega $ is either an annulus, a ball, or the exterior of a ball in $\mathbb{R} ^N $.
If $u: \Omega \to \mathbb{R} $  is a measurable function, 
we will for convenience always extend $u$ onto $\mathbb{R} ^N $ by setting $u(x)=0 $ for $x\in \mathbb{R} ^N \setminus \Omega  $.

\begin{definition}
Let 
${\mathcal H} _0 $ be the family of open half-spaces $H$ in $\mathbb{R} ^N $ such that $0\in \partial H $. 
For any  $H \in {\mathcal H} _0 $, let $\sigma _H $ denote the reflection in $\partial H $. 
We write
$$
\sigma _H u (x) := u(\sigma _H x), \quad x\in \mathbb{R} ^N .
$$
The two-point rearrangement w.r.t. $H$  is given by
\[
u_H (x) := 
\begin{cases}
\max \{ u(x); u(\sigma _H x ) \} & \mbox{ if }\ x \in H ,
\\
\min \{ u(x); u(\sigma _H x ) \} & \mbox{ if } \ x\not\in H .
\end{cases}
\]  
\end{definition}
The notion of two-point rearrangement  was introduced more than fifty years ago as a set  transformation 
in \cite{W}, and was applied to variational problems  for the first time by Brock and Solynin in \cite{BS}.

Note that one has $u= u_H $ iff $u(x)\geq u(\sigma _H x )$ for all $x\in H$.
Similarly, $\sigma _H u= u_H $ iff $u(x)\leq u(\sigma _H x) $ for all $x\in H$.
\\
We will make use of the following properties of the two-point rearrangement (see \cite{BS}).

\begin{lemma}\label{1}
 Let $H\in {\mathcal H} _0 $.

\begin{enumerate}
\item
If $A\in C([0, +\infty ),\mathbb{R})$, $u:\Omega \to \mathbb{R} $ is measurable and $A (|x|,u)\in L^1(\Omega)$, then  $A (|x|, u_H)\in L^1(\Omega)$ and 
\begin{equation}
\label{identity1}
\int_\Omega A (|x|, u)\, dx =\int_\Omega A (|x|, u_H)\, dx \, .
\end{equation}
\item If $B\in L^{\infty } (\mathbb{R})$, $ u\in W^{1,p}(\Omega)$ for some $p\in [1, +\infty )$, then 
\begin{equation}
\label{identity2}
\int_\Omega B(u) |\nabla u|^p \, dx =\int_\Omega B(u_H ) |\nabla u_H |^p
\, dx 
\end{equation}
\end{enumerate}
\end{lemma}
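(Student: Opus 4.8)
The plan is to prove the two rearrangement identities in Lemma~\ref{1} by exploiting the fact that, for each fixed $x\in H$, the two-point rearrangement merely permutes the values of $u$ at the pair $\{x,\sigma_H x\}$, while the weight $|x|$ is invariant under $\sigma_H$ since $0\in\partial H$ and $\sigma_H$ is an isometry fixing the origin.

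\textbf{Part (1).} First I would split the integral over $\Omega$ into the two halves $\Omega\cap H$ and $\Omega\setminus\overline{H}$ (the boundary $\partial H$ is a null set), and in the second half perform the change of variables $y=\sigma_H x$, which is measure-preserving and maps $\Omega\setminus\overline{H}$ onto $\Omega\cap H$ because $\Omega$ is radially symmetric; the weight is unchanged because $|\sigma_H x|=|x|$. This reduces \eqref{identity1} to the pointwise-in-$x$ identity
\[
A(|x|,u(x))+A(|x|,u(\sigma_H x))=A(|x|,u_H(x))+A(|x|,u_H(\sigma_H x)),\qquad x\in H,
\]
which holds trivially: for $x\in H$ the pair $(u_H(x),u_H(\sigma_H x))$ equals $(\max,\min)$ of the pair $(u(x),u(\sigma_H x))$, so the two-element multisets of arguments coincide and $A(|x|,\cdot)$ is applied to the same two numbers on both sides. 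Integrating this pointwise identity over $\Omega\cap H$ and using the change of variables in reverse recombines the pieces to give \eqref{identity1}; the integrability of $A(|x|,u_H)$ follows from the same pointwise bound, since $|A(|x|,u_H(x))|\le |A(|x|,u(x))|+|A(|x|,u(\sigma_H x))|$ and the right-hand side is in $L^1(\Omega)$ by hypothesis together with the reflection-invariance of the measure.

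\textbf{Part (2).} This is the more delicate statement, and I expect the main obstacle to be the justification that $u_H\in W^{1,p}(\Omega)$ with $\nabla u_H(x)=\nabla u(x)$ or $\nabla u(\sigma_H x)$ according to which of $u(x),u(\sigma_H x)$ is the larger, a.e.\ This is a standard but nontrivial fact about two-point rearrangements (proved in \cite{BS}); I would cite it, noting that it follows from the chain rule for Sobolev functions applied on the sets $\{u(x)>u(\sigma_H x)\}$ and $\{u(x)<u(\sigma_H x)\}$, together with the observation that $\nabla u=\nabla(\sigma_H u)\circ(\text{reflection of the gradient})$ has the same modulus on the coincidence set $\{u(x)=u(\sigma_H x)\}$, which is moreover where $\nabla(u-\sigma_H u)=0$ a.e. Granting this, $|\nabla u_H(x)|$ equals $|\nabla u(x)|$ or $|\nabla u(\sigma_H x)|$ pointwise, and $B(u_H(x))$ equals $B(u(x))$ or $B(u(\sigma_H x))$ accordingly with the \emph{same} choice at a.e.\ $x$; hence, exactly as in Part~(1), the integrand sum $B(u(x))|\nabla u(x)|^p+B(u(\sigma_H x))|\nabla u(\sigma_H x)|^p$ over the pair $\{x,\sigma_H x\}$ is preserved, and splitting $\Omega$ into $\Omega\cap H$ and its reflection followed by the measure-preserving change of variables $y=\sigma_H x$ yields \eqref{identity2}. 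The boundedness $B\in L^\infty(\R)$ guarantees all these integrands are in $L^1(\Omega)$ since $u,u_H\in W^{1,p}(\Omega)$.

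In short, both identities reduce to the trivial algebraic fact that the unordered pair of values (or of value--gradient--modulus pairs) at $\{x,\sigma_H x\}$ is unchanged by the two-point rearrangement, combined with the isometry invariance of $|x|$ and of Lebesgue measure under $\sigma_H$; the only substantive input is the Sobolev regularity and gradient formula for $u_H$, which I would quote from \cite{BS} rather than reprove.
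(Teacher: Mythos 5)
Your proposal is correct and follows essentially the same approach as the paper: reduce both identities to pointwise identities on $H\cap\Omega$ (using $|\sigma_H x|=|x|$ and that $u_H$ merely permutes the value pair at $\{x,\sigma_H x\}$), then integrate over $H\cap\Omega$. The only difference is that you make explicit the Sobolev regularity of $u_H$ and the a.e.\ gradient formula (citing \cite{BS}), which the paper leaves implicit when it writes the pointwise identity involving $\nabla u_H$.
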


\begin{proof}Since $|\sigma _H x|= |x|$, we have for a.e. $x\in H\cap \Omega $, 
\begin{equation*}
 A(|x|, u(x))+ A(|\sigma _H x| ,u(\sigma _H x))= 
A(|x|, u_H (x))+ A(|\sigma _H x| ,u_H (\sigma _H x)),
\end{equation*}
and
\begin{equation*}
\begin{split}
  B(u(x))|\nabla u(x)|^p + B(u(\sigma _H x))|\nabla u(\sigma _H x)|^p = &
B(u_H (x))|\nabla u_H (x)|^p \\
& {} + B(u_H (\sigma _H x))|\nabla u_H (\sigma _H x)|^p .
\end{split}
\end{equation*}
Now (\ref{identity1}) and (\ref{identity2})
follow from this by integration over $H\cap \Omega$. 
\end{proof}

In order to study the symmetry of minimizers of \eqref{p} we introduce the notion of foliated Schwarz symmetrization of a function, a function which is   axially symmetric with respect to an axis passing through the
origin and nonincreasing in the polar angle from this axis.

\begin{definition}
If $u:\Omega \to \mathbb{R} $ is measurable, the {\sl foliated Schwarz symmetrization } $u^* $ of $u$ is defined as  the (unique) function satisfying the following properties:
\begin{enumerate}
\item
there is a function $w : [0, +\infty ) \times [0, \pi ) \to \mathbb{R } $, 
$w= w (r, \theta )$, which is nonincreasing in 
$\theta $, and 
\[
u^* (x) = w\left( |x| , \arccos (x_1 /|x| ) \right) , \quad (x \in \Omega );
\]  
\item
$
{\mathcal L} ^{N-1}  \{ x:\, a < u(x) \leq b,\, |x| =r \} = {\mathcal L} ^{N-1} \{ x:\, a < u^* (x) \leq b ,\, |x|=r \}$  
for all $a,b \in \mathbb{R}$ with $a<b$, and $r\geq 0$.
\end{enumerate}
\end{definition} 
\begin{definition}
Let $P_N$ denote the point $(1,0, \ldots , 0)$, the 'north pole' of
the unit sphere ${\mathcal S} ^{N-1} $. 
We say that $u$ is {\sl foliated Schwarz symmetric  w.r.t.  $P_N$} if 
$u=u^*$ - that is, $u$ depends solely on $r$ and on $\theta $ - the 'geographical width' -, 
and is nonincreasing in $\theta $. 

We also say that $u$ is {\sl foliated Schwarz symmetric w.r.t. a point $P\in {\mathcal S}^{N-1} $ } 
if there is a rotation about the origin $\rho $ such that $\rho (P_N)= P$, and 
$u(\rho (\cdot)) = u^* (\cdot )$.
\end{definition}
In other words, a function $u:\Omega\to \R$ is foliated Schwarz symmetric with respect to $P$ if, for every $r>0$ and $c\in \R$,
the restricted superlevel set $\{x: |x|=r, u(x)\geq c\}$ is equal to $\{x: |x|=r\}$ or a geodesic ball in the
sphere $\{x: |x|=r\}$ centered at $rP$. In particular $u$ is axially symmetric with respect to the axis $\R P$.

\noindent Moreover
a measurable function  $u:\Omega \to \mathbb{R}$ is foliated Schwarz symmetric w.r.t. 
$P\in {\mathcal S}^{N-1} $ iff $u=u_H $ for all $H\in {\mathcal H} _0 $ with $P\in H$.

The main result of this section is  the following result which gives a tool to establish if a measurable function is foliated Schwarz symmetric with respect to some point $P$.

\begin{theorem} 
\label{teofr}
 Let $u\in L^{p} (\Omega )$ for some $p\in [1, +\infty )$, 
and assume that for every $H\in {\mathcal H} _0 $ one has either $u= u_H $, or $\sigma _H u= u_H $.
Then $u$ is foliated Schwarz symmetric w.r.t. some point $P\in {\mathcal S}^{N-1} $.
\end{theorem}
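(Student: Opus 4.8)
The plan is to transcribe the hypothesis as a condition on a subset of the unit sphere and then to reduce the statement, by passing to a single superlevel set of $u$, to the corresponding symmetry criterion for measurable subsets of a sphere. For $e\in \mathcal{S}^{N-1}$ put $H(e):=\{x\in\mathbb{R}^N:\langle x,e\rangle>0\}$, so that $\mathcal{H}_0=\{H(e):e\in \mathcal{S}^{N-1}\}$ and $\sigma_{H(e)}=\sigma_{H(-e)}$. Using the remark that $\sigma_H u=u_H$ holds iff $u(x)\le u(\sigma_H x)$ for all $x\in H$, one checks that $\sigma_{H(e)}u=u_{H(e)}$ is equivalent to $u=u_{H(-e)}$; hence, setting $\mathcal{G}:=\{e\in \mathcal{S}^{N-1}:\ u=u_{H(e)}\}$, the hypothesis says exactly that $\mathcal{G}\cup(-\mathcal{G})=\mathcal{S}^{N-1}$. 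By the characterization recalled just before the theorem, it suffices to produce $P\in \mathcal{S}^{N-1}$ with $u=u_H$ for every $H\in\mathcal{H}_0$ such that $P\in H$, i.e.\ with $\{e:\langle e,P\rangle>0\}\subseteq\mathcal{G}$.

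First I would record that $\mathcal{G}$ is closed. The two-point rearrangement is an $L^p$-isometry and a nonexpansive map on $L^p(\Omega)$ (since $\{u_H(x),u_H(\sigma_H x)\}=\{u(x),u(\sigma_H x)\}$ pointwise as an unordered pair), while $\sigma_{H(e)}$ depends continuously on $e$; hence $e\mapsto u_{H(e)}$ is continuous from $\mathcal{S}^{N-1}$ to $L^p(\Omega)$, first for $u\in C_c(\Omega)$ by dominated convergence and then, by density and nonexpansiveness, for every $u\in L^p(\Omega)$ with $p<\infty$. Thus $u=u_{H(e_n)}$ with $e_n\to e$ forces $u=u_{H(e)}$, so $\mathcal{G}$ is closed.

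The crux is to show that $\mathcal{G}$ lies in a closed hemisphere. If $u$ equals a.e.\ a radial function, then $\mathcal{G}=\mathcal{S}^{N-1}$ and any $P$ works. Otherwise I would pick $t_0\in\mathbb{R}$ and $r_0>0$ so that $A:=\{x:|x|=r_0,\ u(x)>t_0\}$ is, up to a null set, a nonempty proper subset of the sphere $\{|x|=r_0\}$. For $e\in\mathcal{G}$ the identity $u=u_{H(e)}$ forces $u(x)\ge u(\sigma_{H(e)}x)$ for a.e.\ $x\in H(e)$, hence $\chi_A=(\chi_A)_{H(e)}$; for $e\notin\mathcal{G}$ one has $-e\in\mathcal{G}$, which analogously gives $\chi_A=(\chi_A)_{\sigma_{H(e)}H(e)}$. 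So $A$ is a measurable subset of a sphere which, for every $H\in\mathcal{H}_0$, satisfies $\chi_A=(\chi_A)_H$ or $\chi_A=(\chi_A)_{\sigma_H H}$; by the symmetry criterion for sets (essentially the content of \cite{BS}, of which the present theorem is the announced generalization to sign-changing $L^p$ functions) $A$ agrees, up to a null set, with a geodesic cap of $\{|x|=r_0\}$ centered at some $P\in \mathcal{S}^{N-1}$. A short computation shows that a proper spherical cap centered at $P$ satisfies $\chi_{\mathrm{cap}}=(\chi_{\mathrm{cap}})_{H(e)}$ if and only if $\langle e,P\rangle\ge 0$; since $\chi_A=(\chi_A)_{H(e)}$ for every $e\in\mathcal{G}$, this yields $\mathcal{G}\subseteq\{e:\langle e,P\rangle\ge 0\}$.

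To finish: if $\langle e,P\rangle>0$ then $\langle -e,P\rangle<0$, so $-e\notin\{e':\langle e',P\rangle\ge 0\}\supseteq\mathcal{G}$; since $\mathcal{G}\cup(-\mathcal{G})=\mathcal{S}^{N-1}$ this forces $e\in\mathcal{G}$. Hence $\{e:\langle e,P\rangle>0\}\subseteq\mathcal{G}$, i.e.\ $u=u_H$ for every $H\in\mathcal{H}_0$ with $P\in H$, so by the remark $u$ is foliated Schwarz symmetric with respect to $P$. I expect the main obstacle to be the third paragraph: the appeal to the set-theoretic criterion together with the measure-theoretic bookkeeping it requires — the passage from $u=u_{H(e)}$ to the statement about almost every superlevel set, and the fact that a subset of a sphere which is polarized favourably in every direction is, up to a null set, a cap. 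Once the sign constraint is dropped this is in spirit the argument of \cite{BS}, but it has to be carried out carefully for merely $L^p$ data.
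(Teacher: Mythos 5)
Your strategy is genuinely different from the paper's, and it rests on a step you attribute to a source where it does not appear. The paper does not reduce to a set-theoretic criterion from \cite{BS}; it reduces to Weth's theorem for \emph{continuous} functions (Theorem \ref{theoweth}) by mollification. The key observation (Lemma \ref{cruc}) is that convolution with a radial, radially nonincreasing mollifier $\varphi_{\varepsilon}$ preserves the relation $u=u_H$ (and hence, by symmetry, $\sigma_H u = u_H$), essentially because $|x-y|\leq |\sigma_H x - y|$ for $x,y\in H$ forces $(u_{\varepsilon})(x)-(u_{\varepsilon})(\sigma_H x)\geq 0$ when $u(y)\geq u(\sigma_H y)$ on $H$. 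Thus every $u_{\varepsilon}\in C^{\infty}$ satisfies the hypothesis and is foliated Schwarz symmetric w.r.t.\ some $P_{\varepsilon}$, and compactness of $\mathcal{S}^{N-1}$ together with $L^p$-convergence of $u_{\varepsilon}\to u$ and $(u_{\varepsilon})^*\to u^*$ finishes. This is clean precisely because it never needs a cap criterion for merely measurable sets; it leans entirely on the already-established continuous case.

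Your reduction — closedness of $\mathcal{G}$, the equivalence $\sigma_{H(e)}u=u_{H(e)}\Leftrightarrow u=u_{H(-e)}$, and the final deduction from $\mathcal{G}\subseteq\{e:\langle e,P\rangle\geq 0\}$ via $\mathcal{G}\cup(-\mathcal{G})=\mathcal{S}^{N-1}$ — is all correct and could form the skeleton of an alternative proof. But the third paragraph, which you rightly flag as the obstacle, contains a genuine gap. First, passing from ``$u=u_{H(e)}$ a.e.\ on $\Omega$'' to ``a.e.\ on a fixed sphere $\{|x|=r_0\}$'' excludes a null set of radii that depends on $e$; since $\mathcal{G}$ is uncountable you must first work with a countable dense subset of $\mathcal{G}$ and then re-deploy your $L^p$-continuity argument for the restricted function $u|_{S_{r_0}}$ — a fixable but genuine omission. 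Second, and more fundamentally, the ``symmetry criterion for sets'' you invoke is not established in \cite{BS}, and the theorem in the paper is announced as a generalization of Weth's continuous result, not of anything in \cite{BS}. The cap criterion for an arbitrary measurable subset of a sphere is a lower-dimensional analogue of the very statement being proved (with range $\{0,1\}$); proving it rigorously would itself require either a mollification argument of the paper's type or a separate geometric argument (e.g.\ via the barycenter of $A$), neither of which you supply. As written, then, the crux of your proposal rests on a citation that does not carry the load.
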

Note that the above result has been shown for continuous functions by Weth in \cite{Weth}. 

\begin{theorem} 
\label{theoweth}
 Let $u\in C(\R^N)$ 
and assume that for every $H\in {\mathcal H} _0 $ one has either $u= u_H $, or $\sigma _H u= u_H $.
Then $u$ is foliated Schwarz symmetric w.r.t. some point $P\in {\mathcal S}^{N-1} $.
\end{theorem}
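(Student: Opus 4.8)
The plan is to read the hypothesis as a statement about a single subset of the sphere. For $e\in{\mathcal S}^{N-1}$ set $H_e:=\{x\in\R^N:\ x\cdot e>0\}\in{\mathcal H}_0$ and
$$
E:=\{e\in{\mathcal S}^{N-1}:\ u=u_{H_e}\}.
$$
Since $u=u_{H_e}$ means $u(x)\ge u(\sigma_{H_e}x)$ for every $x\in H_e$, while $\sigma_{H_e}u=u_{H_e}$ is the same as $u=u_{H_{-e}}$, the assumption says exactly that $E\cup(-E)={\mathcal S}^{N-1}$; and, by the characterisation of foliated Schwarz symmetry recalled just before Theorem~\ref{teofr}, the conclusion amounts to producing $P\in{\mathcal S}^{N-1}$ with $\{e:\ e\cdot P>0\}\subseteq E$. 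A first, easy observation is that $E$ is closed: if $e_n\to e$, $e_n\in E$, and $x\cdot e>0$, then $x\cdot e_n>0$ for $n$ large, so $u(x)\ge u(\sigma_{H_{e_n}}x)$, and passing to the limit using the continuity of $u$ and of $e\mapsto\sigma_{H_e}x$ gives $u(x)\ge u(\sigma_{H_e}x)$.

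The core of the proof, and the step I expect to be the main obstacle, is a composition property of two-point rearrangements: \emph{if $e_1,e_2\in E$ and $e_1\neq-e_2$, then the bisector $e_3:=(e_1+e_2)/|e_1+e_2|$ also lies in $E$}. I would prove it using the elementary identities $\sigma_{H_3}e_1=-e_2$, $\sigma_{H_3}e_2=-e_1$, and $\sigma_{H_1}\sigma_{H_3}=\sigma_{H_3}\sigma_{H_2}=:R$, where $R$ is the rotation fixing $(\mathrm{span}\{e_1,e_2\})^{\perp}$ pointwise and carrying $e_2$ to $e_1$: for $x\in H_3$ at least one of $x\cdot e_1$, $x\cdot e_2$ is positive, and according to the signs of these two numbers one chains the inequalities furnished by $u=u_{H_1}$ and by $u=u_{H_2}$ along a suitable finite word in $\sigma_{H_1},\sigma_{H_2}$ ending at $\sigma_{H_3}$, each letter being applied at a point that does lie in the corresponding half-space. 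This is the spherical analogue of the composition lemmas for polarizations of Brock and Solynin~\cite{BS}; the continuity of $u$ is what keeps the bookkeeping with the various half-spaces harmless (no exceptional null sets to track), which is presumably why the continuous case is isolated here.

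Granting the composition property, $E$ is stable under bisectors, hence — iterating on the dyadic points of the minimising geodesic joining two of its points and using that $E$ is closed — $E$ is geodesically convex in ${\mathcal S}^{N-1}$. A closed geodesically convex subset $E$ with $E\cup(-E)={\mathcal S}^{N-1}$ must be all of ${\mathcal S}^{N-1}$ or a closed hemisphere: a proper closed geodesically convex set is contained in some closed half-sphere $\{e:\ e\cdot v\ge0\}$ (a standard fact about convexity on the sphere), whence $\{e:\ e\cdot v<0\}\subseteq E^{c}\subseteq-E$, i.e.\ $\{e:\ e\cdot v>0\}\subseteq E\subseteq\{e:\ e\cdot v\ge0\}$, and, $E$ being closed, necessarily $E=\{e:\ e\cdot v\ge0\}$. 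If $E={\mathcal S}^{N-1}$, then $u$ is invariant under every reflection in a hyperplane through the origin, hence $O(N)$-invariant, i.e.\ radial, hence foliated Schwarz symmetric with respect to an arbitrary point of ${\mathcal S}^{N-1}$. If instead $E=\{e:\ e\cdot P\ge0\}$ for some $P\in{\mathcal S}^{N-1}$, then $\{e:\ e\cdot P>0\}\subseteq E$, that is $u=u_H$ for every $H\in{\mathcal H}_0$ with $P\in H$, which is precisely the asserted foliated Schwarz symmetry with respect to $P$. This completes the plan; I regard the composition property of the second paragraph as the only point requiring genuine work.
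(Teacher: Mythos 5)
The paper does not prove this theorem: it is quoted from Weth~\cite{Weth} and used as a black box inside the proof of Theorem~\ref{teofr}, so there is no in-paper argument to compare against. Your proposal must therefore be judged on its own, and there is a genuine gap at exactly the step you flag as ``the only point requiring genuine work.''

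The composition lemma you propose --- \emph{if $e_1,e_2\in E$ and $e_1\neq -e_2$, then the bisector $e_3=(e_1+e_2)/|e_1+e_2|$ lies in $E$} --- is false. Take $N=2$, $u(x,y)=y^2-x^2\in C(\R^2)$, $e_1=(1,0)$, $e_2=(0,1)$, so $e_3=(1,1)/\sqrt2$ and $H_3=\{x+y>0\}$. Since $u$ is even in each variable separately, $u=u_{H_{e_1}}$ and $u=u_{H_{e_2}}$ (with equality), i.e.\ $e_1,e_2\in E$, and in fact $\pm e_1,\pm e_2\in E$. But $\sigma_{H_3}(x,y)=(-y,-x)$ gives $u(\sigma_{H_3}(x,y))=-u(x,y)$, so $u(x,y)-u(\sigma_{H_3}(x,y))=2(y^2-x^2)$, which changes sign inside $H_3$ (compare $(2,1)$ and $(1,2)$). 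Hence neither $e_3$ nor $-e_3$ belongs to $E$, contradicting the lemma. Of course this $u$ does not contradict Theorem~\ref{theoweth} --- the hypothesis fails precisely at $H_3$ --- but it shows your lemma cannot be derived from the two local facts $u=u_{H_1}$, $u=u_{H_2}$ alone. Any correct proof must make essential use of the \emph{global} hypothesis that for \emph{every} $H\in\mathcal{H}_0$ one of the two alternatives holds; your sketch never invokes it when establishing $e_3\in E$.

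There is also a structural reason the sketched chaining cannot be carried out: $\sigma_{H_3}$ is not a word in $\sigma_{H_1}$ and $\sigma_{H_2}$. These two reflections generate a (possibly infinite) dihedral group whose reflections fix hyperplanes at integer multiples of the angle $\angle(e_1,e_2)$ away from $\partial H_1$, while $\partial H_3$ sits at the half-angle. In the example above the group generated by $\sigma_{H_1},\sigma_{H_2}$ is $\{\pm\mathrm{id},\sigma_{H_1},\sigma_{H_2}\}$ and does not contain $\sigma_{H_3}$. So no finite chain of the available inequalities can ``end at $\sigma_{H_3}x$'' for generic $x$, which is the concrete manifestation of the failure exhibited above. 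The reduction of the remaining steps (closedness of $E$, bisector-stability $\Rightarrow$ geodesic convexity, and a closed geodesically convex set with $E\cup(-E)=\mathcal S^{N-1}$ being a hemisphere or the whole sphere) is fine, but it rests entirely on the false lemma, so the proof as proposed does not go through.
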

The idea in our proof is to use an approximation argument.  
Let $\varphi \in C^{\infty } _0 (\mathbb{R}^N )$, $\varphi \geq 0$, with $\displaystyle \int _{\mathbb{R}^N } \varphi (x)\, dx =1 $. 
Moreover, assume that $\varphi $ is radial and radially non increasing, that is, 
there is a nonincreasing function $h:[0, +\infty ) \to [0, +\infty ) $ 
such that $\varphi (x)= h(|x|) $ for all $x\in \mathbb{R} ^N $.  
For any function $\varepsilon >0 $, define $\varphi _{\varepsilon } $ by
$\varphi _{\varepsilon } (x) := \varepsilon ^{-N} \varphi ( \varepsilon ^{-1} x)$,  ($x\in \mathbb{R}^N $).
For any $u \in L^1 _{{loc}} (\R^N )$ let 
$u_{\varepsilon } $ be  the standard  mollifier of $u$, given by
$$
u_{\varepsilon} (x) := (u* \varphi _{\varepsilon }) (x) \equiv \int_{\mathbb{R} ^N } u(y) \varphi _{\varepsilon } (x-y) \, dy , \quad (x\in \mathbb{R} ^N ).
$$
The following property is crucial. It allows a reduction to $C^{\infty } $-functions.

\begin{lemma} \label{cruc} 
Let $u\in L^{p} ( \mathbb{R} ^N )$ for some $p\in [1, +\infty ) $, and let $H\in {\mathcal H} _0$ such that $u= u_H $. Then $u_{\varepsilon }= (u_{\varepsilon })_H $ for every $\varepsilon >0$.
\end{lemma}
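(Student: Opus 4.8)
The plan is to reduce the claim to the pointwise characterization recorded in the text right after the definition of the two-point rearrangement: $v=v_H$ holds if and only if $v(x)\ge v(\sigma_H x)$ for all $x\in H$. Thus it suffices to show that $u_\varepsilon(x)\ge u_\varepsilon(\sigma_H x)$ for every $x\in H$. Since $u\in L^p(\R^N)\subset L^1_{loc}(\R^N)$ and $\varphi_\varepsilon\in C^\infty_0(\R^N)$, the convolution $u_\varepsilon$ is a well-defined (smooth) function, and all the integral manipulations below are legitimate; no limiting procedure is involved, as $\varepsilon$ stays fixed throughout.

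First I would fix $x\in H$, split the defining integral $u_\varepsilon(x)=\int_{\R^N}u(y)\varphi_\varepsilon(x-y)\,dy$ into the contributions of $H$ and of $\R^N\setminus H$ (the hyperplane $\partial H$ being Lebesgue-null), and in the second piece perform the change of variables $y\mapsto\sigma_H y$. Using that $\sigma_H$ is a linear isometry — so its Jacobian is $1$ and $|\sigma_H a-\sigma_H b|=|a-b|$ — together with the radiality of $\varphi$, hence of $\varphi_\varepsilon$, which gives $\varphi_\varepsilon(\sigma_H w)=\varphi_\varepsilon(w)$ for every $w$, one obtains
\[
u_\varepsilon(x)=\int_H\Big[u(y)\,\varphi_\varepsilon(x-y)+u(\sigma_H y)\,\varphi_\varepsilon(\sigma_H x-y)\Big]\,dy .
\]
Carrying out the same computation at the point $\sigma_H x$ yields
\[
u_\varepsilon(\sigma_H x)=\int_H\Big[u(y)\,\varphi_\varepsilon(\sigma_H x-y)+u(\sigma_H y)\,\varphi_\varepsilon(x-y)\Big]\,dy ,
\]
and subtracting gives the key identity
\[
u_\varepsilon(x)-u_\varepsilon(\sigma_H x)=\int_H\big(u(y)-u(\sigma_H y)\big)\,\big(\varphi_\varepsilon(x-y)-\varphi_\varepsilon(\sigma_H x-y)\big)\,dy .
\]

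Then I would check that both factors in the integrand are nonnegative for a.e. $y\in H$. The first, $u(y)-u(\sigma_H y)$, is $\ge0$ precisely because the hypothesis $u=u_H$ means $u(y)\ge u(\sigma_H y)$ for all $y\in H$. For the second I use the elementary geometric fact that two points lying in the same open half-space get farther apart when one of them is reflected across $\partial H$: choosing coordinates with $\partial H=\{y_N=0\}$ and $H=\{y_N>0\}$, for $x,y\in H$ one has $|x-y|^2=|x'-y'|^2+(x_N-y_N)^2\le|x'-y'|^2+(x_N+y_N)^2=|x-\sigma_H y|^2=|\sigma_H x-y|^2$, the last equality by the isometry property. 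Since $\varphi_\varepsilon(z)=\varepsilon^{-N}h(|z|/\varepsilon)$ with $h$ nonincreasing, this forces $\varphi_\varepsilon(x-y)\ge\varphi_\varepsilon(\sigma_H x-y)$. Hence the integrand is nonnegative, so $u_\varepsilon(x)\ge u_\varepsilon(\sigma_H x)$ for every $x\in H$, i.e. $u_\varepsilon=(u_\varepsilon)_H$.

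The routine calculations are harmless; the one point that needs care is the "symmetrization under convolution" bookkeeping — making sure the change of variables $y\mapsto\sigma_H y$ and the radial symmetry of $\varphi$ are combined consistently, so that the two cross terms involving $\varphi_\varepsilon(\sigma_H x-y)$ come out with exactly the right arguments and the telescoping into $\big(u(y)-u(\sigma_H y)\big)\big(\varphi_\varepsilon(x-y)-\varphi_\varepsilon(\sigma_H x-y)\big)$ is exact. Once that displayed identity is in place, the conclusion is immediate.
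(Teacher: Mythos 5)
Your proof is correct and follows essentially the same route as the paper: you split the convolution over $H$ and its reflected complement, use the radiality of $\varphi_\varepsilon$ and the isometry property of $\sigma_H$ to fold everything into the identity $u_\varepsilon(x)-u_\varepsilon(\sigma_H x)=\int_H\big(u(y)-u(\sigma_H y)\big)\big(\varphi_\varepsilon(x-y)-\varphi_\varepsilon(\sigma_H x-y)\big)\,dy$, and then verify the two factors are nonnegative exactly as the paper does. The only difference is that you spell out the geometric inequality $|x-y|\le|\sigma_H x-y|$ in coordinates, which the paper states without proof.
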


\begin{proof}
It is easy to see that
$$
|x-y|=|\sigma _H x -\sigma _H y | \leq |\sigma _H x -y | = 
|x-\sigma _H y |, 
$$ 
whenever $x,y \in H$. Since $u(y) \geq u(\sigma _H y)$ and 
 since $\varphi _{\varepsilon } $ is radial and 
radially nonincreasing, we have for every $x\in H$, 
\begin{eqnarray*}
 & & u_{\varepsilon } (x) - u_{\varepsilon } (\sigma _H x)  =  \int _{\mathbb{R}^N } 
u(y) [ \varphi _{\varepsilon }(x-y ) - \varphi _{\varepsilon }(\sigma _H x -y ) ] \, dx 
\\
 & = & \int _H 
\left\{ u(y) [ \varphi _{\varepsilon }(x-y ) - \varphi _{\varepsilon } 
(\sigma _H x -y ) ] + u(\sigma _H y ) [ \varphi _{\varepsilon } 
( x-\sigma _H y ) - \varphi _{\varepsilon } (\sigma _H x -\sigma _H y )]     \right\} \, dx
\\
 & = & \int _H 
( u(y)-u(\sigma _H y) )  [ \varphi _{\varepsilon }(x-y ) - \varphi _{\varepsilon }(\sigma _H x -y ) ]  \, dx \geq 0.
\end{eqnarray*}
The Lemma is proved.
\end{proof}

\begin{corollary}\label{cruccorollary}
 Let $u\in L^{p} ( \mathbb{R} ^N )$ for some $p\in [1, +\infty ) $, 
and let $H\in {\mathcal H}_0 $ such that $\sigma _H u= u_H $. 
Then $\sigma _H (u_{\varepsilon })= (u_{\varepsilon })_H $ for every $\varepsilon >0$.
\end{corollary}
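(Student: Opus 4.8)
The plan is to deduce Corollary~\ref{cruccorollary} from Lemma~\ref{cruc} by applying the latter to the reflected function $v:=\sigma_H u$. First I would record two elementary identities about the two-point rearrangement and the reflection $\sigma_H$ (which is an orthogonal involution preserving $|x|$). The first: for \emph{any} measurable $w$ one has $(\sigma_H w)_H=w_H$; this is immediate from the definition, since for $x\in H$, $(\sigma_H w)_H(x)=\max\{w(\sigma_H x),w(x)\}=w_H(x)$, and analogously with $\min$ for $x\notin H$. The second: mollification commutes with $\sigma_H$, i.e. $(\sigma_H w)_\varepsilon=\sigma_H(w_\varepsilon)$ for every $\varepsilon>0$. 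Indeed, performing the change of variables $y\mapsto\sigma_H y$ in the convolution integral and using that $\varphi_\varepsilon$ is radial (so $\varphi_\varepsilon(\sigma_H x-\sigma_H z)=\varphi_\varepsilon(x-z)$ and $\varphi_\varepsilon(x-\sigma_H z)=\varphi_\varepsilon(\sigma_H x-z)$), one finds $(\sigma_H w)_\varepsilon(x)=\int_{\mathbb{R}^N}w(z)\varphi_\varepsilon(\sigma_H x-z)\,dz=w_\varepsilon(\sigma_H x)$.

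Next I would observe that the hypothesis $\sigma_H u=u_H$ is, as noted just before Lemma~\ref{1}, equivalent to $u(\sigma_H x)\ge u(x)$ for all $x\in H$, which is precisely the condition $v=v_H$ for $v:=\sigma_H u$. Since $\sigma_H$ preserves the $L^p$-norm, $v\in L^p(\mathbb{R}^N)$, so Lemma~\ref{cruc} applies to $v$ and gives $v_\varepsilon=(v_\varepsilon)_H$ for every $\varepsilon>0$.

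Finally I would combine the pieces. By the commutation identity, $v_\varepsilon=(\sigma_H u)_\varepsilon=\sigma_H(u_\varepsilon)$; and by the first identity, $(v_\varepsilon)_H=(\sigma_H(u_\varepsilon))_H=(u_\varepsilon)_H$. Therefore $\sigma_H(u_\varepsilon)=(u_\varepsilon)_H$, which is the assertion. I expect no real obstacle here; the only point requiring a line of care is the verification that mollification commutes with $\sigma_H$, which rests on the radial symmetry of $\varphi$. Alternatively, one may bypass the reduction entirely and simply repeat the computation in the proof of Lemma~\ref{cruc} with the reversed inequality $u(y)-u(\sigma_H y)\le 0$, which turns the displayed chain of equalities into $u_\varepsilon(x)-u_\varepsilon(\sigma_H x)\le 0$ for $x\in H$, i.e. $\sigma_H(u_\varepsilon)=(u_\varepsilon)_H$.
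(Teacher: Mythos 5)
Your proof is correct. The paper does not spell out a derivation of the Corollary, tacitly leaving the reader to rerun the computation of Lemma \ref{cruc} with the reversed inequality $u(y)\le u(\sigma_H y)$, which is exactly your concluding remark. Your primary route is a genuine alternative: you reduce to Lemma \ref{cruc} directly by applying it to $v:=\sigma_H u$, observing first that the hypothesis $\sigma_H u=u_H$ is the same as $v=v_H$, and then that the two identities $(\sigma_H w)_H=w_H$ and $(\sigma_H w)_\varepsilon=\sigma_H(w_\varepsilon)$ allow you to transport the conclusion from $v_\varepsilon$ back to $u_\varepsilon$. Both identities are verified correctly (the second uses only that $\varphi_\varepsilon$ is radial; the radial monotonicity of $\varphi$ is not needed for that step, only for Lemma \ref{cruc} itself). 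The advantage of your reduction is that it treats Lemma \ref{cruc} as a black box rather than reopening its convolution estimate; the cost is a short detour through the auxiliary identities. Either route closes the gap the paper leaves implicit.
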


We are now able to prove Theorem \ref{teofr}.
\medskip

\begin{proof}[Proof of Theorem \ref{teofr}]

  Since  
for every $H\in {\mathcal H} _0 $ one has either $u= u_H $, or $\sigma _H u= u_H$,
Lemma \ref{cruc} and Corollary \ref{cruccorollary} apply. 
Then
either $u_\varepsilon= (u_\varepsilon)_H $, or $\sigma _H u_\varepsilon= (u_\varepsilon)_H$ for every $\varepsilon>0$.
Since $u_{\varepsilon } \in C^{\infty } (\mathbb{R} ^N)$, 
Theorem \ref{theoweth} tells us that $u_{\varepsilon } $ is foliated Schwarz symmetric w.r.t. some point 
$P_{\varepsilon } \in {\mathcal S} ^{N-1} $, for every $\varepsilon >0 $. Since ${\mathcal S} ^{N-1} $ is compact, 
there is a sequence of positive numbers $\{\varepsilon _n \} $ and a point 
$P\in {\mathcal S} ^{N-1} $ such that $u_{\varepsilon _n } $ is foliated Schwarz symmetric w.r.t. a point 
$P_n  \in {\mathcal S} ^{N-1} $ and $\varepsilon _n \to 0$, $P_n  \to P$ as $n\to + \infty $. 
Let $\rho _n $ and $\rho $ be rotations such that $\rho _n (N ) = P_n $, 
($n\in \mathbb{N} $), and $\rho (N)=P$. Writing $u_n := u_{\varepsilon _n }$ we have that 
\begin{equation}
\label{conv}
u_n  (\rho _n (\cdot )) = (u_n ) ^* (\cdot ) , \quad (n\in \mathbb{N} ).
\end{equation} 
Since $u_n \to u$, it follows that 
$(u_n ) ^* \to u^* $ in $L^p (\mathbb{R} ^N )$, and since $P_n \to P$  
we also have that $u_n (\rho _n (\cdot)) \to u(\rho (\cdot )) $ in $L^p (\mathbb{R}^N )$, as  $n\to \infty $. 
This, together with  (\ref{conv}) implies that $u(\rho (\cdot )) = u^* (\cdot )$. The Theorem is proved.     
\end{proof}


\section{Symmetry of minimizers}\label{section_foliated_symmetry}
In this section we study the properties of symmetry of minimizers of \eqref{p}. The main result is the following 

\begin{theorem}\label{thm_foliated_Schwarz_symmetric}
Assume   (\ref{growth}), (\ref{growthFt}),
and (\ref{condF})  if $p\in (1,2)$.
Then every minimizer of \eqref{p} is foliated Schwarz symmetric w.r.t. some point $P\in {\mathcal S}^{N-1}$.
\end{theorem}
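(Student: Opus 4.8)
The plan is to use the two-point rearrangement characterization of foliated Schwarz symmetry furnished by Theorem~\ref{teofr}: it suffices to show that for every $H\in\mathcal H_0$, a minimizer $u$ satisfies either $u=u_H$ or $\sigma_H u=u_H$. Since $\norm{u}_{L^p(\Omega)}=1$ and $\int_\Omega u=0$ are preserved under the two-point rearrangement (indeed the Lebesgue constraint is preserved by part (1) of Lemma~\ref{1} applied with $A(|x|,t)=t$, and the $L^p$ norm likewise), and since $u_H$ again lies in $W^{1,q}(\Omega)$, the function $u_H$ is an admissible competitor. I will argue that $u_H$ is in fact again a minimizer, and then invoke a strong comparison principle to conclude that $u_H$ coincides with $u$ or with $\sigma_H u$.

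The key computation is that the functional in \eqref{p} does not increase under $v\mapsto v_H$. Write the functional as $\int_\Omega |\nabla\Psi(v)|^2\,dx - \int_\Omega (1+|v|)^{-2\theta}F(|x|,v)\,dx$. For the gradient term, note $\Psi$ is an increasing bijection of $\R$, so $\Psi(v)_H=\Psi(v_H)$ pointwise; applying part (2) of Lemma~\ref{1} with $B\equiv 1$ to the $W^{1,2}$ function $\Psi(v)$ gives $\int_\Omega|\nabla\Psi(v_H)|^2 = \int_\Omega|\nabla\Psi(v)|^2$, so the leading term is actually invariant. For the lower-order term I must show $\int_\Omega (1+|v_H|)^{-2\theta}F(|x|,v_H)\,dx \ge \int_\Omega (1+|v|)^{-2\theta}F(|x|,v)\,dx$; writing $G(r,t):=(1+|t|)^{-2\theta}F(r,t)$ this is the pointwise-pair inequality
\[
G(|x|,v_H(x))+G(|x|,v_H(\sigma_Hx)) \ge G(|x|,v(x))+G(|x|,v(\sigma_Hx)),\qquad x\in H,
\]
which, after integrating over $H\cap\Omega$, yields the desired estimate. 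When $p\ge 2$ one uses the growth hypothesis together with monotonicity arguments; for $p\in(1,2)$ the extra hypothesis \eqref{condF}, namely $t(1+|t|)F_t(r,t)-2\theta|t|F(r,t)\le 0$, is exactly what is needed to control $G_t$ and make this rearrangement inequality go through. Hence $\lambda^{\theta,p}(\Omega)\le \int_\Omega|\nabla\Psi(u_H)|^2 - \int_\Omega G(|x|,u_H) \le \int_\Omega|\nabla\Psi(u)|^2-\int_\Omega G(|x|,u)=\lambda^{\theta,p}(\Omega)$, so $u_H$ is also a minimizer and equality holds throughout; in particular $\int_\Omega G(|x|,u_H)=\int_\Omega G(|x|,u)$, which forces the pointwise-pair inequality to be an equality a.e.

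Next I would write the Euler--Lagrange equation for $w:=\Psi(u)$. Because the problem is noncoercive the regularity of $u$ is not automatic; this is why the paper stresses that one must establish the analogue of the Girao--Weth regularity result. Granting (from the companion regularity statement in this section) that $u$, and hence $w=\Psi(u)$, is continuous on $\overline\Omega$ and that $w$ solves a semilinear elliptic equation $-\Delta w = \Phi(|x|,w)$ weakly with a locally Lipschitz nonlinearity $\Phi$, I consider on the half-domain $H\cap\Omega$ the two functions $w$ and $\widetilde w:=\Psi(\sigma_H u)=\sigma_H w$, both of which solve the same equation there (by radial symmetry of the coefficients) and agree on $\partial H\cap\Omega$. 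The equality case of the rearrangement forces, for a.e.\ $x\in H$, that either $w(x)\ge\widetilde w(x)$ holds for the whole connected component or the reverse; more precisely one shows $z:=w-\widetilde w$ does not change sign on any connected component of $H\cap\Omega$. Since $z$ satisfies a linear equation $-\Delta z = c(x)z$ with $c\in L^\infty_{loc}$ and $z=0$ on $\partial H\cap\Omega$, the strong maximum principle gives on each component either $z\equiv 0$ or $z$ has a strict sign. Combining components and using continuity across $\partial H$, one deduces globally $w\ge\sigma_H w$ on $H$ (i.e.\ $u=u_H$) or $w\le\sigma_H w$ on $H$ (i.e.\ $\sigma_H u=u_H$). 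As this holds for every $H\in\mathcal H_0$, Theorem~\ref{teofr} applies and $u$ is foliated Schwarz symmetric w.r.t.\ some $P\in\mathcal S^{N-1}$.

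The main obstacle is the step that upgrades "$u_H$ is a minimizer" to a pointwise statement: without coercivity one cannot directly quote standard elliptic regularity, so one must carefully derive continuity and the elliptic equation for $w=\Psi(u)$ (this is precisely the regularity lemma the authors announce they will prove in this section), and then run a strong-maximum-principle / unique-continuation argument on each connected component of $H\cap\Omega$ to rule out the degenerate possibility that $w-\sigma_H w$ vanishes on part of a component while being nonzero elsewhere. The rearrangement inequality for the lower-order term in the regime $p\in(1,2)$ — where one genuinely needs hypothesis \eqref{condF} rather than mere growth bounds — is the other delicate point, but it is a finite-dimensional calculus inequality and should be manageable.
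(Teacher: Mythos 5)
Your overall skeleton is right (Theorem~\ref{teofr}, pass to $U=\Psi(u)$, elliptic regularity, Euler equations, strong maximum principle), but two of your central claims are wrong and hide genuine gaps.

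First, the term $\displaystyle\int_\Omega (1+|v|)^{-2\theta}F(|x|,v)\,dx$ does \emph{not} require a one-sided rearrangement inequality: part (1) of Lemma~\ref{1} gives the exact identity $\int_\Omega A(|x|,v)\,dx=\int_\Omega A(|x|,v_H)\,dx$ for \emph{any} continuous $A$, because the two-point rearrangement merely swaps the values at $x$ and $\sigma_H x$, which share the same $|x|$. So the ``pointwise-pair inequality'' you want to prove with \eqref{condF} is always an equality, without any hypothesis on $F$; you get nothing from its ``equality case'', and you cannot base your constant-sign claim for $z:=w-\sigma_H w$ on it. Correspondingly, your reading of \eqref{condF} is off. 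In the paper it is used in Step~3 to show that the Lagrange multiplier $d$ in \eqref{euler1} satisfies $d\leq 0$ when $p\in(1,2)$: condition \eqref{condF} is equivalent to $t\,g(r,t)\le 0$ with $g$ as in \eqref{defg}, which forces the right side of \eqref{integral3} to be nonpositive, hence $d\le 0$. The reason this matters is that in the transformed equation \eqref{euler3} the nonlinearity $M(t)=|\Phi(t)|^{p-2}\Phi(t)(1+|\Phi(t)|)^{\theta}$ is merely monotone but \emph{not} locally Lipschitz near $0$ when $p<2$; one cannot absorb the $M$-difference into a bounded coefficient $L(x)$. With $d\le 0$ and $M$ nondecreasing one instead obtains a one-sided differential inequality $-\Delta h\geq P(x)h$ with $P$ bounded, to which the strong maximum principle still applies.

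Second, your sign argument is missing its engine. You compare $w$ and $\tilde w=\sigma_H w$ directly and assert that $z=w-\tilde w$ ``does not change sign on any connected component of $H\cap\Omega$,'' but you give no reason: a solution of $-\Delta z=c(x)z$ with $z=0$ on $\partial H\cap\Omega$ can perfectly well change sign. The missing idea is to work with $h:=U_H-U$, which is nonnegative on $\Omega\cap H$ by the very definition of $u_H$ (since $\Psi$ is increasing, $U_H=\max(U,\sigma_H U)$ on $H$). Then the strong maximum principle applied to the linear (in)equality satisfied by $h$ yields the dichotomy $h\equiv 0$ or $h>0$, which is exactly $u=u_H$ or $\sigma_H u=u_H$. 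But to write a single linear equation for $h$ you must also know that the multipliers $(c,d)$ in \eqref{euler1} and $(c',d')$ in \eqref{euler2} coincide; this is the paper's Step~3 (identities \eqref{integral1}--\eqref{integral4} combined with Lemma~\ref{1}), a step your proposal omits. In short: the $F$-term is exactly invariant (no inequality to prove), \eqref{condF} controls the sign of the Lagrange multiplier $d$ when $p<2$, the multipliers must be matched, and the strong-sign conclusion rests on choosing $h=U_H-U\geq 0$ rather than $U-\sigma_H U$.
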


\begin{remark}\rm  Condition (\ref{condF}) is equivalent to
\begin{equation}
\label{tddtF}
t\frac{\partial}{\partial t} \left( \frac{F(r,t)}{(1+|t|)^{2\theta } } 
\right) \leq 0 \quad \forall (r,t) \in [0,+\infty)\times \mathbb{R} .
\end{equation}
It is satisfied, for instance, if $F(r,t)= F(r,-t)$ and if 
\begin{equation}
\label{condF1}
(1+t)F_t (r,t) -2\theta F(r,t) \leq 0 \quad \mbox{for $t\geq 0$}.
\end{equation}
An example  is  
$$
F(r,t )= -c_0 |t|^{\alpha } , \quad \alpha \geq 2\theta , \, c_0 \geq 0.
$$     
Observe that $F$ satisfies the growth condition \eqref{growth} with suitable $c_0$ and $\alpha$ such that $2\theta\le \alpha\le p$. 
 \end{remark}
 \medskip
\begin{proof}  
We divide the proof into four steps.

\noindent {\sl Step 1}
Let $H\in {\mathcal H}_0$, and let $u$ be a minimizer of \eqref{p}.
The Euler equation satisfied by $u$ is
\begin{eqnarray}
\label{euler1}
 & & -\nabla \left( 
 \frac{\nabla u}{(1+|u|)^{2\theta} }  
 \right)
 -\theta 
\frac{|\nabla u|^2  \mbox{sgn}\, u}{(1+|u|)^{2\theta +1 } } 
+c +d |u|^{p-2} u = g(|x|,u) \quad \mbox{in $\Omega $ },
\\
\label{bdry1}
 & & \frac{\partial u}{\partial \nu } = 0 \quad \mbox{on $\partial \Omega $},
 \end{eqnarray}
 where $c,d\in \mathbb{R}$,
\begin{equation}
\label{defg}
g(r,t) := \frac{\partial}{\partial t} \left( \frac{F(r,t)}{2(1+|t|)^{2\theta }} \right) , \qquad \forall (r,t)\in [0,+\infty )\times \mathbb{R}
\end{equation}
and $\nu $ denotes the exterior unit normal to $\Omega $.
Setting 
$$
I (v):= \int_{\Omega}\frac{|\nabla v|^2-F(|x|,v)}{(1+|v|)^{2\theta}} \, dx,
$$ 
we have, by Lemma \ref{1}, 
$$
u_H\ne 0, \quad u_H\in W^{1,q} (\Omega) ,\quad \int_\Omega u_H\, dx=0 , \quad \|u_H\|_{L^p}=1, \quad I(u)= I(u_H ).
$$
Hence, $u_H $ is a minimizer, too, so that it satisfies  
\begin{eqnarray}
\label{euler2}
 & & -\nabla \left( 
 \frac{\nabla u_H}{(1+|u_H|)^{2\theta} }  
 \right)
 -\theta 
\frac{|\nabla u_H|^2  \mbox{sgn}\, u_H}{(1+|u_H|)^{2\theta +1 } } 
+c' +d' |u_H|^{p-2} u_H = g(|x|,u_H) \quad \mbox{in $\Omega $ },
\\
\label{bdry2}
 & & \frac{\partial u_H}{\partial \nu } = 0 \quad \mbox{on $\partial \Omega$,}
 \end{eqnarray}
 where $c',d'\in \mathbb{R}$.

\noindent {\sl Step 2}
We claim that  $u,u_H \in W^{1,q} (\Omega )\cap W^{2,2} (\Omega)\cap C^1 (\overline{\Omega}) $. 
Set
$$
\Phi (\eta ) := \Psi^{-1} (\eta )
$$
where $\Psi$ has been defined in (\ref{defnPsi}). Let
$U:= \Psi (u)$.
Note that $u= \Phi (U)$, $u_H = \Phi (U_H) $,  
$$
\Phi (\eta ) = \left( [1+ (1-\theta) |\eta | ]^{1/(1-\theta )} -1 \right) \mbox{sgn}\, \eta ,
$$
and $\Phi $ is locally Lipschitz continuous.
Rewriting (\ref{euler1}) and (\ref{euler2}) in terms of $U$ and $U_H$ we find
\begin{eqnarray}
\label{euler3}
 & & 
 -\Delta U + d M(U) = N(|x|, U),
 \\
\label{euler4}
 & &
 -\Delta U_H + d M(U_H ) = N(|x|, U_H )
 \end{eqnarray}
 in $\Omega $, where
\begin{eqnarray*}
M(t) & := & |\Phi (t)|^{p-2} \Phi (t) (1+|\Phi (t)|)^{\theta },
\\
N(r,t) & := & (g(r,t)-c ) (1+|\Phi (t)|)^{\theta } \, ,
\end{eqnarray*}
for any $r\in [0,+\infty )$, $t\in \mathbb{R}$.

\noindent  Observe that, by the growth conditions  \eqref{growth},  \eqref{growthFt} and definition of $ \Phi (t) $,  we have
\begin{eqnarray*}
|g(r,t) |& \le & c_\theta(1+ |t|)^{p-1-2\theta }, \\
|M(t) |& \le & c'_\theta(1+ |t|)^{\frac{p-1+\theta}{1-\theta} },
\\
|N(r,t) |& \le & c''_\theta(1+ |t|)^{p-1-2\theta+\frac{\theta}{1-\theta} }.
\end{eqnarray*}
Now, the growths of $M$ and $N$ allow us to to apply  classical techniques for Neumann problems (see p.  272 of \cite{mawhin} and p. 271 of \cite{S}) to 
state that
$U\in H^1(\Omega)$ is in fact $C^{1,\beta } (\overline{\Omega })$, with  $\beta \in (0,1)$. Therefore $u$ has the same regularity.

\noindent {\sl Step 3}
Integrating (\ref{euler1}) and (\ref{euler2}) give
\begin{eqnarray}
\label{integral1}
 & & -\theta \int_{\Omega} \frac{|\nabla u|^2 \mbox{sgn}\, u}{(1+|u|)^{2\theta} }\, dx  +c \int_{\Omega } dx + d \int_{\Omega } |u|^{p-2} u\, dx = 
 \int_{\Omega } g(|x|,u )\, dx,
 \\
\label{integral2}   
 & & -\theta \int_{\Omega} \frac{|\nabla u_H|^2 \mbox{sgn}\, u_H}{(1+|u_H|)^{2\theta} }\, dx  +c' \int_{\Omega } dx + d' \int_{\Omega } |u_H|^{p-2} u_H\, dx = \int_{\Omega } g(|x|,u_H )\, dx.
\end{eqnarray}
Further, multiplying  (\ref{euler1}) and (\ref{euler2}) with $u$ and $u_H $ respectively, then integrating and using the constraints, yield
\begin{eqnarray}
\label{integral3}
 & &  \int_{\Omega} \frac{|\nabla u|^2 [1+ (1-\theta ) |u|]}{(1+|u|)^{2\theta +1} }\, dx  + d  = 
 \int_{\Omega } ug(|x|,u )\, dx,
 \\
\label{integral4}
 & &  \int_{\Omega} \frac{|\nabla u_H |^2 [1+ (1-\theta ) |u_H|]}{(1+|u_H |)^{2\theta +1} }\, dx  + d'  = 
 \int_{\Omega } u_H g(|x|,u_H )\, dx
.
\end{eqnarray}
Now (\ref{integral1})--(\ref{integral4}) together with Lemma \ref{1} show that necessarily 
$$
c=c'\,,\,\,\,\,\,\,\,\,\,\,d=d'\,.
$$
Moreover, if $p\in(1,2)$, then (\ref{tddtF}) holds, so that (\ref{integral3}) yields $d\leq 0$.
\\   

\noindent {\sl Step 4}

Note that $t\to M(t)$ is nondecreasing.
Set $h:=U_H-U$, and note that $h\geq 0 $ in $\Omega \cap H$. Subtracting (\ref{euler4}) from (\ref{euler3}), we split into two cases.
\begin{enumerate} 
\item
Let $p\geq 2$. Then we find that 
$$
-\Delta h= L (x) h \quad \mbox{in $\Omega \cap H$,}
$$
where 
$$
L (x) := \left\{
\begin{array}{ll}
\frac{ 
N(|x|, U_H (x))-N (|x|,U(x)) -d [M(U_H (x))-M(U(x))] }{h(x)} 
 & \quad \mbox{if $h(x)>0$},
 \\
0 & \quad \mbox{if $h(x)=0$}
\end{array}
\right.   
$$ 
is a bounded function.
\item
Let $p \in (1,2)$. Then $d\leq 0$, so that
$$
-\Delta h \geq P(x)h \quad \mbox{in $\Omega \cap H$},
$$
where  
$$
P (x):= \left\{
\begin{array}{ll}
\frac{ 
N(|x|, U_H (x))-N (|x|,U(x))}{h(x)}   
 & \quad \mbox{if $h(x)>0$,}
 \\
0 & \quad \mbox{if $h(x)=0$}
\end{array}
\right.   
$$ 
is a bounded function. 
\end{enumerate}
Thus, in both cases the Strong Maximum Principle tells us that either $h(x)\equiv 0$, or $h(x)>0$ throughout $\Omega \cap H $.
This implies that we have either $u= u_H$, or $\sigma _H u =u_H $ in $\Omega $. 
 By Theorem \ref{teofr} we deduce that $u$ is foliated Schwarz symmetric.
\end{proof}


\section{Anti-symmetry for $p=2$ in dimension 2}
In this section we study symmetry properties of the solutions to \eqref{p} in the case 
$p=2$,  $\Omega=B$, where $B$ is a ball in $\mathbb{R}^2$, and $F\equiv 0$. 
We will show that for small parameter values $\theta$,
there exists a unique minimizer of
$$
v\to \int_{B}\frac{|\nabla v|^2}{(1+|v|)^{2\theta}}\, dx,\,\,\,\,\,\, v\in W^{1,q}(B), \, v\neq 0 \\, \int_{B}v\, dx=0, \, \norm{v}_{L^2(B)}=1\, ,
$$
which is anti-symmetric. 
Recall that  $\theta$ satisfies \eqref{teta} and $q$ satisfies \eqref{q2}. With abuse of notations, we will denote the infimum of the above
functional by $\lambda^{\theta,2}(B)$.

In the following we will use the notations of the proof of Theorem 4.1. More in details, let $ u_{\theta } $ be a minimizer for $\lambda^{\theta,2}(B)$, with corresponding constants $c=c_{\theta } $ and $d= d_{\theta }$, see equation (\ref{integral1}). 
By (4.12) 
we have, 
\begin{equation}\label{defn-d_theta}
d_\theta=
-\int_B\frac{|\nabla u_\theta|^2 [1+(1-\theta) |u_\theta|)]}{(1+|u_\theta|)^{2\theta+1}}\, dx\,.
\end{equation}
We will also frequently work with the functions 
$$
U_{\theta } := \Psi _{\theta } (u_{\theta } )
$$ 
where 
$$
\Psi _{\theta } (\xi ) = \frac{\mbox{sgn} (\xi) }{1-\theta } [(1+ |\xi | )^{1-\theta } -1 ]
$$ 
(see (\ref{defnPsi})),
and
$$
\Phi _{\theta }(\eta ) = \Psi _{\theta } ^{-1}(\eta ) = \left( [1+ (1-\theta) |\eta | ]^{1/(1-\theta )} -1 \right) \mbox{sgn}(\eta)\,.
$$
Our calculations will often contain a generic constant $C$ that may vary from line to line, but will be {\sl independent of } $\theta $.
  
Furthermore, as a consequence of Theorem \ref{thm_foliated_Schwarz_symmetric}, we will   assume that  $u_{\theta } $ is foliated Schwarz symmetric w.r.t. the positive $x_1 $-half axis, that is,
\begin{equation}
\label{u_theta_even}
u_\theta(x_1,x_2)=u_\theta(x_1,-x_2)\,.
\end{equation}
The anti-symmetry of $u_{\theta } $ then reads as 
$
u_\theta(x_1,x_2):= - u_\theta(-x_1,x_2)
$, if $\theta $ is small.
\begin{lemma}
\label{first_lemma} Under  assumptions \eqref{teta}, \eqref{q2},
the function $\theta\to \lambda^{\theta,2}(B)$ is decreasing. Moreover
$\lambda^{\theta,2}(B)\leq \lambda^{2}(B)$, where
$$
\lambda^{2}(B)
=\inf\left\{\int_B |\nabla u|^2\, dx, u\in H^1(B), \int_B u\, dx=0, \norm{u}_{L^2(B)}=1\right\}\,.
$$
\end{lemma}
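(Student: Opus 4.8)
The plan is to prove the two assertions separately, exploiting the characterization \eqref{pnew} of $\lambda^{\theta,2}(B)$ as the infimum of $\int_B |\nabla \Psi_\theta(v)|^2\,dx$ over admissible $v$, where $\Psi_\theta(\xi) = \frac{\mathrm{sgn}(\xi)}{1-\theta}[(1+|\xi|)^{1-\theta}-1]$. The bound $\lambda^{\theta,2}(B)\le \lambda^2(B)$ should be the easy part: take a minimizer (or near-minimizer) $w$ of the classical Rayleigh-type quotient defining $\lambda^2(B)$, so $\int_B w=0$ and $\|w\|_{L^2(B)}=1$; then $w$ is admissible in \eqref{p0} for the noncoercive functional, and since $\Psi_\theta$ is $1$-Lipschitz (because $(1+|t|)^{-\theta}\le 1$ for $\theta>0$), one has $|\nabla \Psi_\theta(w)| = (1+|w|)^{-\theta}|\nabla w|\le |\nabla w|$ pointwise a.e. Hence $\int_B |\nabla \Psi_\theta(w)|^2 \le \int_B |\nabla w|^2 = \lambda^2(B)$, and taking the infimum over such $w$ (or passing to the limit along a minimizing sequence for $\lambda^2(B)$) gives $\lambda^{\theta,2}(B)\le \lambda^2(B)$. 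One should check $w\in W^{1,q}(B)$, which is immediate since $q<2$ and $B$ is bounded, so $H^1(B)\subset W^{1,q}(B)$.

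For monotonicity in $\theta$, I would argue that for $0<\theta_1<\theta_2<1/2$ and any admissible $v$ (with $\int_B v=0$, $\|v\|_{L^2}=1$, $v\ne 0$), the pointwise inequality $(1+|v(x)|)^{-\theta_2}\le (1+|v(x)|)^{-\theta_1}$ holds since $1+|v|\ge 1$, whence
$$
\int_B \frac{|\nabla v|^2}{(1+|v|)^{2\theta_2}}\,dx \;\le\; \int_B \frac{|\nabla v|^2}{(1+|v|)^{2\theta_1}}\,dx .
$$
Taking the infimum over the common class of admissible functions yields $\lambda^{\theta_2,2}(B)\le \lambda^{\theta_1,2}(B)$, i.e.\ $\theta\mapsto\lambda^{\theta,2}(B)$ is nonincreasing. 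The subtlety — and the place where "decreasing" rather than merely "nonincreasing" must be justified — is strict monotonicity. Here I would use that a minimizer $u_{\theta_1}$ for $\lambda^{\theta_1,2}(B)$ exists (Theorem \ref{existence}), is not a.e.\ constant (it has zero mean and unit $L^2$ norm, hence $\nabla u_{\theta_1}\not\equiv 0$ on a set of positive measure), and is sign-changing and nontrivial, so that $|u_{\theta_1}|>0$ on a set of positive measure intersecting $\{|\nabla u_{\theta_1}|>0\}$ (this last point needs a brief argument: if $\nabla u_{\theta_1}=0$ a.e.\ on $\{u_{\theta_1}=0\}$, which holds, then $|\nabla u_{\theta_1}|^2 = |\nabla u_{\theta_1}|^2\mathbf 1_{\{u_{\theta_1}\ne0\}}$ a.e.). On that set the inequality above is strict, so
$$
\lambda^{\theta_2,2}(B)\le \int_B \frac{|\nabla u_{\theta_1}|^2}{(1+|u_{\theta_1}|)^{2\theta_2}}\,dx < \int_B \frac{|\nabla u_{\theta_1}|^2}{(1+|u_{\theta_1}|)^{2\theta_1}}\,dx = \lambda^{\theta_1,2}(B).
$$

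The main obstacle is precisely this strict-inequality step: one must rule out that every minimizer $u_{\theta_1}$ is supported, in the sense of its gradient, on $\{u_{\theta_1}=0\}$, which would make the two integrands agree. This follows because $\nabla u = 0$ a.e.\ on any level set $\{u = \mathrm{const}\}$ for Sobolev functions, so $|\nabla u_{\theta_1}|>0$ forces $u_{\theta_1}\ne 0$ there up to a null set; combined with $\|u_{\theta_1}\|_{L^2}=1$ (so $u_{\theta_1}\not\equiv 0$) and $\int_B u_{\theta_1}=0$ (so $u_{\theta_1}$ is nonconstant, hence $\nabla u_{\theta_1}\not\equiv 0$), we get $\mathcal L^N(\{|\nabla u_{\theta_1}|>0\}\cap\{u_{\theta_1}\ne 0\})>0$, which is exactly what is needed. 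The remaining verifications (admissibility of competitors, measurability, finiteness of all integrals via the growth/Poincaré bounds already established in the proof of Theorem \ref{existence}) are routine.
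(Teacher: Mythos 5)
Your proof proposal follows the same route as the paper: for the upper bound $\lambda^{\theta,2}(B)\le\lambda^2(B)$ you insert a minimizer of $\lambda^2(B)$ as a competitor and use $(1+|u|)^{-2\theta}\le 1$; for monotonicity you insert a minimizer $u_{\theta_1}$ of $\lambda^{\theta_1,2}(B)$ as a competitor for $\lambda^{\theta_2,2}(B)$ and use $(1+|u_{\theta_1}|)^{-2\theta_2}\le(1+|u_{\theta_1}|)^{-2\theta_1}$. One genuine added value in your write-up: the paper's own proof only establishes the non-strict inequalities $\lambda^{\theta_1,2}(B)\ge\lambda^{\theta_2,2}(B)$, i.e.\ that $\theta\mapsto\lambda^{\theta,2}(B)$ is nonincreasing, even though the statement says \emph{decreasing}; you supply the missing strictness by observing that $\nabla u_{\theta_1}=0$ a.e.\ on $\{u_{\theta_1}=0\}$ (a standard fact for Sobolev functions), so the set $\{|\nabla u_{\theta_1}|>0\}\cap\{u_{\theta_1}\ne 0\}$ has positive measure (because $u_{\theta_1}$ is nonconstant, having zero mean and unit $L^2$-norm), and on that set the pointwise inequality is strict. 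That step is correct and closes a small gap in the published argument. The only routine point worth flagging explicitly, which both you and the paper handle implicitly, is that the competitor $u_{\theta_1}\in W^{1,q}(B)$ with $2(1-\theta_1)\le q<2$ is automatically admissible for the $\theta_2$-problem since $2(1-\theta_2)<2(1-\theta_1)\le q$.
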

\begin{proof}
Let $\theta_1<\theta_2$, let $u_{\theta_1}$ be a minimizer for $\lambda^{\theta_1,2}(B)$, and let $ 2(1-\theta_1)\le q<2$. We obtain
$$
\lambda^{\theta_1,2}(B)=\int_B\frac{|\nabla u_1|^2}{(1+|u_1|)^{2 \theta_1}}\, dx
\geq
\int_B\frac{|\nabla u_1|^2}{(1+|u_1|)^{2 \theta_2}}\, dx
\geq 
\lambda^{\theta_2,2}(B).
$$
Next, let $u$ be a minimizer for $\lambda^{2}(B)$. Then
$$
\lambda^{2}(B)\geq \int_B |\nabla u|^2\, dx\geq 
\int_B\frac{|\nabla u|^2}{(1+|u|)^{2 \theta}}\, dx\geq \lambda^{\theta,2}(B)\,.
$$\end{proof}
\begin{lemma}
\label{lemma_d_theta} 
Under  assumptions \eqref{teta}, \eqref{q2}, let $u_\theta$ be a minimizer for $\lambda^{\theta,2}(B)$. Let $d_\theta$ be defined by (\ref{defn-d_theta}).
There holds
$\lim\limits_{\theta\to 0} d_{\theta}=-\lambda^{2}(B)$.
\end{lemma}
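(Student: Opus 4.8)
I would organize the proof around the substitution $U_\theta=\Psi_\theta(u_\theta)$, whose point is that $d_\theta$ becomes a weighted Dirichlet integral of $U_\theta$. Since $\nabla U_\theta=(1+|u_\theta|)^{-\theta}\nabla u_\theta$, we have $|\nabla u_\theta|^2=(1+|u_\theta|)^{2\theta}|\nabla U_\theta|^2$, so \eqref{defn-d_theta} rewrites as
\begin{equation*}
d_\theta=-\int_B|\nabla U_\theta|^2\,w_\theta\,dx,\qquad w_\theta:=\frac{1+(1-\theta)|u_\theta|}{1+|u_\theta|}=1-\frac{\theta|u_\theta|}{1+|u_\theta|}\in[1-\theta,\,1],
\end{equation*}
and moreover $\int_B|\nabla U_\theta|^2\,dx=\lambda^{\theta,2}(B)$. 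Thus everything reduces to understanding the behaviour of $U_\theta$ in $W^{1,2}(B)$ as $\theta\to0$.

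The second step, which I expect to be the main obstacle, is to prove that $U_\theta$ converges strongly in $W^{1,2}(B)$ to a minimizer of $\lambda^2(B)$, with bounds uniform in $\theta$ (say $\theta\in(0,\tfrac12]$, which is all we need). Since $\Psi_\theta(0)=0$, $\Psi_\theta$ is odd and $\Psi_\theta'(\xi)=(1+|\xi|)^{-\theta}\le1$, we get $|\Psi_\theta(\xi)|\le|\xi|$, hence $\|U_\theta\|_{L^2(B)}\le\|u_\theta\|_{L^2(B)}=1$; together with $\|\nabla U_\theta\|_{L^2(B)}^2=\lambda^{\theta,2}(B)\le\lambda^2(B)$ from Lemma \ref{first_lemma}, this bounds $U_\theta$ in $W^{1,2}(B)$ independently of $\theta$. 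By the Rellich theorem, along any sequence $\theta_n\to0$ we get, up to a subsequence, $U_{\theta_n}\rightharpoonup U_0$ in $W^{1,2}(B)$, $U_{\theta_n}\to U_0$ in $L^r(B)$ for every $r<\infty$, and a.e. Since $\Phi_\theta\to\mathrm{id}$ locally uniformly as $\theta\to0$ and $u_\theta=\Phi_\theta(U_\theta)$, it follows that $u_{\theta_n}\to U_0$ a.e.; the polynomial growth of $\Phi_\theta$, uniform for $\theta\le\tfrac12$, combined with the uniform $L^r$-bounds on $U_{\theta_n}$, gives equi-integrability of $\{|u_{\theta_n}|^2\}$, so $u_{\theta_n}\to U_0$ in $L^2(B)$. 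Hence $\|U_0\|_{L^2(B)}=1$ and $\int_B U_0\,dx=0$, i.e. $U_0$ is admissible for $\lambda^2(B)$.

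Now weak lower semicontinuity and Lemma \ref{first_lemma} yield
\begin{equation*}
\lambda^2(B)\le\int_B|\nabla U_0|^2\,dx\le\liminf_{n\to\infty}\int_B|\nabla U_{\theta_n}|^2\,dx=\liminf_{n\to\infty}\lambda^{\theta_n,2}(B)\le\lambda^2(B),
\end{equation*}
so all inequalities are equalities: $U_0$ is a minimizer of $\lambda^2(B)$ and $\int_B|\nabla U_{\theta_n}|^2\to\int_B|\nabla U_0|^2$, which together with the weak convergence upgrades to $\nabla U_{\theta_n}\to\nabla U_0$ strongly in $L^2(B)$. Finally I would conclude by writing
\begin{equation*}
d_{\theta_n}+\lambda^2(B)=-\int_B\bigl(|\nabla U_{\theta_n}|^2-|\nabla U_0|^2\bigr)w_{\theta_n}\,dx+\int_B|\nabla U_0|^2\bigl(1-w_{\theta_n}\bigr)\,dx,
\end{equation*}
where the first integral tends to $0$ because $|w_{\theta_n}|\le1$ and $|\nabla U_{\theta_n}|^2\to|\nabla U_0|^2$ in $L^1(B)$, while the second is bounded by $\theta_n\int_B|\nabla U_0|^2\,dx=\theta_n\lambda^2(B)\to0$. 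Thus $d_{\theta_n}\to-\lambda^2(B)$, and since the limit is independent of the chosen sequence, $\lim_{\theta\to0}d_\theta=-\lambda^2(B)$. The delicate point throughout is the uniform-in-$\theta$ compactness — in particular, promoting the a.e. convergence of $U_\theta$ to $L^2$ convergence of $u_\theta$ so that the constraints $\int_B u_\theta=0$ and $\|u_\theta\|_{L^2}=1$ survive in the limit and force $U_0$ to be a genuine minimizer of $\lambda^2(B)$.
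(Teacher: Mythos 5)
Your argument follows essentially the same route as the paper's: the substitution $U_\theta=\Psi_\theta(u_\theta)$, the uniform $H^1(B)$ bound on $U_\theta$ (with $\theta\le\tfrac12$), Rellich compactness, and the verification that the constraints $\int_B u=0$, $\|u\|_{L^2(B)}=1$ pass to the limit so that the weak $H^1$-limit is admissible for $\lambda^2(B)$. The only difference is cosmetic: the paper concludes by a sandwich argument, combining the upper bound $-d_\theta\le\lambda^{\theta,2}(B)\le\lambda^2(B)$ with the lower-semicontinuity estimate $\liminf(-d_\theta)\ge\|\nabla V\|_{L^2(B)}^2\ge\lambda^2(B)$, whereas you first upgrade to strong $H^1$ convergence of $U_\theta$ and then compute the limit of $d_\theta$ directly via your decomposition in terms of the weight $w_\theta$; both are correct and of comparable length.
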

\begin{proof}
First we observe that
\begin{equation}
\label{d_above}
-d_\theta
=
\int_B\frac{|\nabla u_\theta|^2 (1+(1-\theta) |u_\theta|))}{(1+|u_\theta|)^{2\theta+1}}\, dx
\leq 
\int_B\frac{|\nabla u_\theta|^2}{(1+|u_\theta|)^{2\theta}}\, dx
\leq \lambda^{\theta,2}(B)
\leq \lambda^{2}(B)
\end{equation}
by Lemma \ref{first_lemma}.
On the other hand,
\begin{equation}
\label{ineq_d_theta}
-d_\theta
\geq 
\int_B \frac{|\nabla u_\theta|^2(1-\theta)}{(1+|u_\theta|)^{2\theta}}\, dx
=(1-\theta)
\int_B|\nabla U_\theta|^2\, dx\,.
\end{equation}
Moreover, it is easy to see that
$\norm{U_\theta}_{L^2(B)}$ is uniformly bounded, 
since $\norm{u_\theta}_{L^2(B)}=1$ and $\theta \leq \frac{1}{2}$.
Therefore also $\norm{U_\theta}_{H^1(B)}$ is uniformly bounded.
By compactness, as $\theta\to 0$, $U_\theta$ converges weakly to some function $V\in H^1(B)$ and strongly in $L^2(B)$. 
By the lower semi-continuity of the norm we get from (\ref{ineq_d_theta})
\begin{equation}
\label{d_below}
 \liminf_{\theta\to 0} (-d_\theta ) \geq 
 \norm{\nabla V}^2_{L^2(B)}\,.
\end{equation}
Further, the a.e. limit of $U_{\theta }$ is the limit of $u_\theta$, say $u$. By the uniqueness of the limit, $u= V$ a.e. in $B$. 
We recall that
$\norm{u_{\theta}}_{L^2(B)}=1$ and $\displaystyle \int_B u_{\theta }\, dx =0$.
Since $\norm{\Psi_{\theta}(u_\theta)}_{H^1(B)}$, by the growth of $\Psi_{\theta}$, we deduce that $u_{\theta } \to u$ in $L^2 (B)$ and that $\norm{V}_{L^2(B)}=1$ and 
$\displaystyle \int_B V\, dx=0$. Together with (\ref{d_below}), this implies
that
\begin{equation}
\label{d_below2}
 \liminf_{\theta\to 0} (-d_\theta)\geq \lambda^{2}(B)\,.
\end{equation}
Now the Lemma follows from inequalities (\ref{d_above}) and (\ref{d_below2}). 
\end{proof}

\begin{proposition}
\label{first_prop} Let $u_\theta$ be a minimizer for $\lambda^{\theta,2}(B)$. Under  assumptions \eqref{teta}, \eqref{q2},
we have 
\begin{enumerate}
\item
$\norm{u_\theta}_{W^{1,\infty}(B)} \leq C$, where $C$ does not depend on $\theta $.
\item
Let $u$ be the limit of $u_\theta$, as $\theta \to 0$,  in $W^{1,r}(\Omega)$, for every $r\in (1, +\infty )$. Then $\norm{u}_{L^2(B)}=1$ and $\displaystyle \int_B u\, dx=0$.
\end{enumerate}
\end{proposition}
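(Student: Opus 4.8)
The plan is to establish part (1) first, since part (2) then follows by a soft compactness argument essentially identical to what was done in the proof of Lemma \ref{lemma_d_theta}. For part (1), I would follow the strategy already used in Step 2 of the proof of Theorem \ref{thm_foliated_Schwarz_symmetric}, but now tracking the dependence on $\theta$ carefully. Recall that $U_\theta = \Psi_\theta(u_\theta)$ solves a semilinear Neumann problem of the form $-\Delta U_\theta + d_\theta M_\theta(U_\theta) = N_\theta(|x|, U_\theta)$ in $B\subset \mathbb R^2$, with homogeneous Neumann boundary data, where in the present case $F\equiv 0$ so that $g\equiv 0$ and $N_\theta(r,t) = -c_\theta (1+|\Phi_\theta(t)|)^\theta$, while $M_\theta(t) = |\Phi_\theta(t)|^{p-2}\Phi_\theta(t)(1+|\Phi_\theta(t)|)^\theta$ with $p=2$. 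First I would record the uniform bounds we already have: $\|u_\theta\|_{L^2(B)} = 1$, $d_\theta$ is bounded (by Lemma \ref{lemma_d_theta}, or directly by \eqref{d_above}), $\|U_\theta\|_{H^1(B)}$ is uniformly bounded, and $c_\theta$ is bounded --- the last point requires integrating the equation over $B$, using \eqref{integral1} with $g\equiv 0$, and controlling $\int_B |\nabla u_\theta|^2\,\mathrm{sgn}\,u_\theta/(1+|u_\theta|)^{2\theta}$ and $\int_B |u_\theta|^{p-2}u_\theta\,dx = \int_B u_\theta\,dx = 0$ by the already-established $H^1$ bound on $U_\theta$ and the $L^2$ normalization.

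With these uniform bounds in hand, I would bootstrap. Since $N=2$ and $\|U_\theta\|_{H^1(B)} \le C$, Sobolev embedding gives $\|U_\theta\|_{L^s(B)} \le C$ for every $s<\infty$ with $C=C(s)$ independent of $\theta$; since $\Phi_\theta(\eta)$ grows like $|\eta|^{1/(1-\theta)}$ uniformly for $\theta \le 1/2$ (so the exponent stays in $[1,2]$), we get $\|u_\theta\|_{L^s(B)}\le C(s)$ for all $s<\infty$, and hence $\|M_\theta(U_\theta)\|_{L^s(B)} + \|N_\theta(|x|,U_\theta)\|_{L^s(B)} \le C(s)$ for all $s<\infty$ uniformly in $\theta$. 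Elliptic $L^p$-regularity for the Neumann Laplacian on the ball then gives $\|U_\theta\|_{W^{2,s}(B)}\le C(s)$ for every $s<\infty$, and Morrey's embedding $W^{2,s}(B)\hookrightarrow C^{1,\alpha}(\overline B)$ (valid for $s>2=N$, with $\alpha = 1-2/s$) yields $\|U_\theta\|_{C^{1,\alpha}(\overline B)}\le C$ uniformly in $\theta$. Finally, $u_\theta = \Phi_\theta(U_\theta)$ with $\Phi_\theta$ locally Lipschitz and, crucially, with Lipschitz constant on the (uniformly bounded) range of $U_\theta$ that stays bounded as $\theta\to 0$: $\Phi_\theta'(\eta) = (1+(1-\theta)|\eta|)^{\theta/(1-\theta)} \le (1+|\eta|)^{\theta/(1-\theta)}$, which is bounded on bounded $\eta$-sets uniformly for $\theta\le 1/2$. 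Hence $\|u_\theta\|_{W^{1,\infty}(B)} = \|\nabla u_\theta\|_{L^\infty} + \|u_\theta\|_{L^\infty} \le \|\Phi_\theta'(U_\theta)\|_{L^\infty}\|\nabla U_\theta\|_{L^\infty} + \|\Phi_\theta(U_\theta)\|_{L^\infty} \le C$, independent of $\theta$, proving (1).

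For part (2), the uniform $W^{1,\infty}$ bound together with $\|u_\theta\|_{C^{1,\alpha}}\le C$ (transported from $U_\theta$ via the uniform Lipschitz bound on $\Phi_\theta$, or re-derived by the same chain rule estimate applied to the $C^{1,\alpha}$ bound) gives, by Arzel\`a--Ascoli and reflexivity of $W^{1,r}$, that along a subsequence $u_\theta \to u$ in $C^1(\overline B)$ and weakly in $W^{1,r}(B)$ for every $r\in(1,\infty)$. In particular $u_\theta\to u$ uniformly, hence in $L^2(B)$, so $\|u\|_{L^2(B)} = \lim_\theta\|u_\theta\|_{L^2(B)} = 1$ and $\int_B u\,dx = \lim_\theta \int_B u_\theta\,dx = 0$. (This is exactly the convergence already invoked in Lemma \ref{lemma_d_theta}, where it was shown that the $H^1$-weak limit $V$ of $U_\theta$ coincides a.e. with the limit $u$ of $u_\theta$ and satisfies these two constraints; part (2) just records it as a separate statement.)

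\textbf{Main obstacle.} The delicate point is not the elliptic bootstrap itself --- that is standard once the data are controlled --- but rather verifying that every constant in the chain (the Sobolev constants, the $L^s$-bounds on $M_\theta(U_\theta)$ and $N_\theta$, the Calder\'on--Zygmund constant for the Neumann problem, the local Lipschitz constant of $\Phi_\theta$, and the boundedness of $c_\theta$ and $d_\theta$) is genuinely \emph{independent of} $\theta$ as $\theta \to 0^+$. The only structural input that makes this work is $0 < 2\theta < 1$, which keeps the exponent $1/(1-\theta)$ in $\Phi_\theta$ bounded by $2$ and the exponent $\theta/(1-\theta)$ in $\Phi_\theta'$ bounded by $1$; I would make sure to state explicitly at the outset that all estimates are performed for $\theta\in(0,1/2)$ and that the generic constant $C$ is, as announced before the statement, independent of $\theta$.
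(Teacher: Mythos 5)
Your proposal is correct and follows essentially the same strategy as the paper's (very terse) proof: derive a uniform $H^1(B)$ bound on $U_\theta = \Psi_\theta(u_\theta)$, verify that the constants $c_\theta$, $d_\theta$ entering the $U$-equation \eqref{euler3} are bounded uniformly in $\theta$, run the elliptic bootstrap for the Neumann problem to obtain $C^{1,\alpha}$ control, and transfer back to $u_\theta = \Phi_\theta(U_\theta)$ via the uniform local Lipschitz bound $|\Phi_\theta'(\eta)| \le (1+|\eta|)^{\theta/(1-\theta)}$ valid for $\theta \le 1/2$. The paper's actual proof is only four sentences long (it cites the bootstrap from Struwe, p.~271, without carrying it out, and does not mention $c_\theta$ explicitly), and it inserts an intermediate step --- multiplying \eqref{euler3} by $U_\theta$, integrating, and using Lemma \ref{lemma_d_theta} together with the growth of $M$ to get $\Vert U_\theta \Vert_{L^{(p-1+\theta)/(1-\theta)}} \le C$ --- which you bypass by going straight from the uniform $H^1$ bound and the two-dimensional Sobolev embedding to $\Vert U_\theta\Vert_{L^s} \le C(s)$ for all $s<\infty$; in $N=2$ this is the stronger route and makes the paper's intermediate $L^s$ estimate redundant, so your version is if anything cleaner. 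The one step you take that the paper leaves implicit, and that is genuinely necessary for the bootstrap, is the uniform bound on $c_\theta$ obtained by integrating the Euler equation and using $\int_B u_\theta = 0$; you handle it correctly, and this is important because Lemma \ref{lemma_c_theta} (which would also give it) is stated \emph{after} Proposition \ref{first_prop} and relies on it, so it cannot be invoked here. Part (2) is handled the same way as in the paper (compactness plus identification of the limit, as already used in Lemma \ref{lemma_d_theta}).
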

\begin{proof}
The $H^1(B)$-norm of  ${U_\theta}$ is uniformly bounded by Lemma \ref{first_lemma}.
By multiplying equation (\ref{euler3}) by $U_{\theta}$ and integrating on $B$, one has that the right hand side of the equality is uniformly bounded, due to 
Lemma \ref{first_lemma}.
The growth of $M$ and Lemma \ref{lemma_d_theta} imply that
$\norm{U_\theta}_{L^{\frac{p-1+\theta}{1-\theta}}(B)}\leq C$.
This allows us to use the bootstrap argument described 
at p. 271 of \cite{S}.
\end{proof}

Let $v_\theta(x_1,x_2):= - u_\theta(-x_1,x_2)$. Then
we obtain from (\ref{integral1}),
\begin{equation}\label{defn-c_theta}
c_\theta=
\frac{\theta}{|B|}\int_B\frac{|\nabla u_\theta|^2 \mbox{sgn} (u_\theta)}{(1+|u_\theta|)^{2\theta+1}}\, dx
= - \frac{\theta}{|B|}\int_B\frac{|\nabla v_\theta|^2 \mbox{sgn} (v_\theta)}{(1+|v_\theta|)^{2\theta+1}}\, dx\,.
\end{equation}
\begin{lemma}\label{lemma_c_theta} 
Under  assumptions \eqref{teta}, \eqref{q2}, let $u_\theta$ be a minimizer for $\lambda^{\theta,2}(B)$. Let $c_\theta$ be defined by (\ref{defn-c_theta}).
There holds
$|c_{\theta}|
\leq 
C\theta \Vert u_{\theta }- v_{\theta }\Vert_{L^2(B)}$ for a positive constant $C$ independent on $\theta$.
\end{lemma}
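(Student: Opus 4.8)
The plan is to estimate the right-hand side of \eqref{defn-c_theta}, namely
$$
c_\theta = \frac{\theta}{|B|}\int_B\frac{|\nabla u_\theta|^2\,\mbox{sgn}(u_\theta)}{(1+|u_\theta|)^{2\theta+1}}\,dx
= -\frac{\theta}{|B|}\int_B\frac{|\nabla v_\theta|^2\,\mbox{sgn}(v_\theta)}{(1+|v_\theta|)^{2\theta+1}}\,dx\,,
$$
by taking the arithmetic mean of the two expressions. Denoting $\Phi_\theta(t):=\dfrac{t\,\mbox{sgn}(t)}{(1+|t|)^{2\theta+1}}|\nabla(\cdot)|^2$ schematically, one writes
$$
2c_\theta = \frac{\theta}{|B|}\int_B\left[\frac{|\nabla u_\theta|^2\,\mbox{sgn}(u_\theta)}{(1+|u_\theta|)^{2\theta+1}} - \frac{|\nabla v_\theta|^2\,\mbox{sgn}(v_\theta)}{(1+|v_\theta|)^{2\theta+1}}\right]dx\,,
$$
so it suffices to bound this integrand in $L^1(B)$ by $C\Vert u_\theta-v_\theta\Vert_{L^2(B)}$, uniformly in $\theta$.

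First I would split the difference of the two integrands by inserting an intermediate term, writing it as
$$
\frac{\mbox{sgn}(u_\theta)\bigl(|\nabla u_\theta|^2-|\nabla v_\theta|^2\bigr)}{(1+|u_\theta|)^{2\theta+1}}
+ |\nabla v_\theta|^2\left[\frac{\mbox{sgn}(u_\theta)}{(1+|u_\theta|)^{2\theta+1}}-\frac{\mbox{sgn}(v_\theta)}{(1+|v_\theta|)^{2\theta+1}}\right].
$$
For the first term I would factor $|\nabla u_\theta|^2-|\nabla v_\theta|^2=(\nabla u_\theta-\nabla v_\theta)\cdot(\nabla u_\theta+\nabla v_\theta)$ and use the Cauchy–Schwarz inequality together with the uniform $W^{1,\infty}$ bound on $u_\theta$ and $v_\theta$ from Proposition \ref{first_prop}(1) — note that $v_\theta$ inherits the same bound since it is, up to a reflection and a sign, equal to $u_\theta$ — to obtain a bound by $C\Vert\nabla u_\theta-\nabla v_\theta\Vert_{L^1(B)}$. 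For the second term, the bracketed factor is the difference of the locally Lipschitz function $s\mapsto \mbox{sgn}(s)(1+|s|)^{-(2\theta+1)}$ evaluated at $u_\theta$ and $v_\theta$; on the uniformly bounded range of these functions its Lipschitz constant is bounded independently of $\theta$ (for $\theta\in(0,1/2)$), so this term is pointwise $\le C|\nabla v_\theta|^2|u_\theta-v_\theta|$, which integrates to $\le C\Vert u_\theta-v_\theta\Vert_{L^1(B)}$ after using the $L^\infty$ bound on $\nabla v_\theta$.

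It then remains to pass from the $L^1$-norm of $\nabla u_\theta-\nabla v_\theta$ to the $L^2$-norm of $u_\theta-v_\theta$. The main obstacle — and the only genuinely delicate point — is precisely this gradient-difference estimate: since the functional is noncoercive, one cannot simply test an elliptic equation for $u_\theta-v_\theta$ with itself. I would instead subtract the Euler equations \eqref{euler3} for $U_\theta=\Psi_\theta(u_\theta)$ and for $\Psi_\theta(v_\theta)$ — both minimizers realize $\lambda^{\theta,2}(B)$, hence satisfy \eqref{euler3} with the same constant $d$ by Step 3 of the proof of Theorem \ref{thm_foliated_Schwarz_symmetric}, while the constant $c$ changes sign and is itself $O(\theta)$ — test the resulting relation for $\Psi_\theta(u_\theta)-\Psi_\theta(v_\theta)$ with itself, and use the monotonicity of $M$ together with the uniform bounds of Proposition \ref{first_prop} to control $\Vert\nabla(\Psi_\theta(u_\theta)-\Psi_\theta(v_\theta))\Vert_{L^2(B)}^2$ by $C\Vert u_\theta-v_\theta\Vert_{L^2(B)}^2$ (the lower-order contributions being Lipschitz in the uniformly bounded range). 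Converting back via the locally Lipschitz map $\Phi_\theta$, with Lipschitz constant uniform in $\theta$ on the relevant bounded set, and applying Hölder's inequality on the bounded domain $B$ to replace $\Vert\cdot\Vert_{L^1}$ by $\Vert\cdot\Vert_{L^2}$, yields $\Vert\nabla u_\theta-\nabla v_\theta\Vert_{L^1(B)}\le C\Vert u_\theta-v_\theta\Vert_{L^2(B)}$. Combining the three bounds gives $|c_\theta|\le C\theta\Vert u_\theta-v_\theta\Vert_{L^2(B)}$ with $C$ independent of $\theta$, as claimed.
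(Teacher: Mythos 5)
Your decomposition has a gap that cannot be repaired as stated: in the second term you treat $s\mapsto \operatorname{sgn}(s)(1+|s|)^{-(2\theta+1)}$ as a locally Lipschitz function, but this map is not even continuous at $s=0$ (it jumps from $-1$ to $+1$). On any region where $u_\theta$ and $v_\theta$ have opposite signs and are both small, the bracketed factor is close to $\pm 2$ while $|u_\theta-v_\theta|$ can be arbitrarily small, so the pointwise bound by $C|u_\theta-v_\theta|$ fails, and with it the estimate of the second term. Moreover, your route to controlling $\|\nabla u_\theta-\nabla v_\theta\|_{L^1}$ by $\|u_\theta-v_\theta\|_{L^2}$ is circular: when you subtract the two Euler equations, the $c_\theta$ contributions do \emph{not} cancel (since $c$ flips sign when passing from $u_\theta$ to $v_\theta$, they add), and after testing with $U_\theta-V_\theta$ you are left with a term of size $|c_\theta|\cdot\|U_\theta-V_\theta\|_{L^2}$ on the right. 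Even granting the a priori bound $|c_\theta|\le C\theta$, this yields only $\|\nabla(U_\theta-V_\theta)\|_{L^2}^2\le C\theta\|U_\theta-V_\theta\|_{L^2}+\cdots$, which does not scale as the quadratic-vs-quadratic inequality $\|\nabla(U_\theta-V_\theta)\|_{L^2}\le C\|U_\theta-V_\theta\|_{L^2}$ that the conversion step requires.

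The paper avoids both difficulties by not working with the gradient-bearing formula \eqref{defn-c_theta} at all. Instead, it multiplies the Euler equation by $(1+|u_\theta|)^\theta$ to obtain
\begin{equation*}
-\nabla\bigl((1+|u_\theta|)^{-\theta}\nabla u_\theta\bigr)+c_\theta(1+|u_\theta|)^\theta+d_\theta\,u_\theta(1+|u_\theta|)^\theta=0,
\end{equation*}
and then simply integrates over $B$: the divergence term vanishes by the Neumann condition, so $c_\theta\int_B(1+|u_\theta|)^\theta\,dx=-d_\theta\int_B u_\theta(1+|u_\theta|)^\theta\,dx$. This gives $|c_\theta|\le C\,|J|$ with $J:=\int_B u_\theta(1+|u_\theta|)^\theta\,dx$, and no gradient quantities appear. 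Writing $J=\tfrac12\int_B\bigl[u_\theta(1+|u_\theta|)^\theta-v_\theta(1+|v_\theta|)^\theta\bigr]\,dx$ by the reflection and using $\int_B u_\theta=\int_B v_\theta=0$, one splits $J$ into two integrals each carrying a factor $(1+|\cdot|)^\theta-1$ or $(1+|u_\theta|)^\theta-(1+|v_\theta|)^\theta$, whose derivative in the argument is $O(\theta)$ on the uniformly bounded range supplied by Proposition \ref{first_prop}. That is where the factor $\theta$ and the factor $\|u_\theta-v_\theta\|_{L^2(B)}$ both come from, and nothing in the argument touches the nodal set or the gradients. If you want to salvage your approach, the essential missing idea is to replace the gradient representation of $c_\theta$ by the integrated, gradient-free identity before attempting any difference estimate.
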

\begin{proof}
By multiplying equation
 (4.3), (with $p=2$ and $g=0$), by $(1+|u_\theta|)^{\theta}$, we have
\begin{equation}\label{eqeulermultiplied}
-\nabla ( (1+|u_{\theta} |)^{-\theta } \nabla u_{\theta }) + c_{\theta } (1+|u_\theta |)^{\theta } + d_{\theta } u_{\theta }(1+|u_{\theta } |)^{\theta }=0.
\end{equation}
Integrating this gives
$$
c_{\theta } \int _B (1+|u_{\theta }|)^{\theta }\, dx + d_{\theta } \int _B u_{\theta} (1+|u_{\theta } |)^{\theta }\, dx=0,
$$
since $\frac{\partial u_{\theta}}{\partial \nu}=0$ on $\partial B$.
The first integral in this identity is bounded from below, and $|d_{\theta }|$ is bounded by Lemma \ref{lemma_d_theta}.
If $J$ denotes $\displaystyle \int _B u_\theta(1+|u_{\theta }|)^{\theta }\, dx$, we deduce that 
\begin{equation}\label{estimate_c_theta}
|c_{\theta}| \leq C |J|
\end{equation}
for a constant $C$ independent on $\theta$.
A change of variables gives
$$
J =\frac 12 \int_B [u_{\theta}(1+|u_{\theta}|)^{\theta}  -v_{\theta }(1 + |v_{\theta}|)^{\theta}]\, dx
\,.
$$
Since $\displaystyle \int _B u_{\theta}\, dx= \int_B v_{\theta}\, dx=0$, we get
$$
J= \frac 12 \int_B (u_{\theta}-v_{\theta}) [(1+ |v_{\theta} |)^{\theta }-1] \, dx+ \frac 12 \int_B u_{\theta}[(1+|u_{\theta}|)^{\theta} - (1+|v_{\theta}|)^{\theta}]\, dx\,.
$$
Let $J_1$ denote the first term and $J_2$ the second one in this identity.
A short computation shows that
$$
|J_1 | \leq \frac{\theta}{2} \int _B |u_{\theta}-v_{\theta}| |v_{\theta}|\, dx\,,
\,\,\,\,\,\,\,\,\,\,\,\,\,\,
|J_2 | \leq \frac{\theta}{2}  \int _B |u_\theta-v_{\theta}| |u_{\theta}|\, dx\,.
$$
Since $u_{\theta}$ and $v_{\theta}$ are uniformly bounded by Proposition \ref{first_prop}, this gives
$$
|J| \leq C \theta \Vert u_{\theta }-v_{\theta} \Vert_{L^2(B)} .
$$
The conclusion follows from estimate (\ref{estimate_c_theta}).
\end{proof}
Now we can prove the main result of the section
\begin{theorem}
\label{antsymm}

There is a number $\theta _0 >0$, such that every minimizer $u_{\theta } $ of $\lambda ^{\theta ,2} (B)$ satisfying  (\ref{u_theta_even}) is unique and anti-symmetric w.r.t. $x_1$, that is,
\begin{equation}
\label{antis}
u_{\theta}(x_1,x_2):= - u_{\theta}(-x_1,x_2) ,
\end{equation}  
for any $0<\theta < \theta _0 $.
\end{theorem}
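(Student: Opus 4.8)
The plan is to set $v_\theta(x_1,x_2) := -u_\theta(-x_1,x_2)$, observe that $v_\theta$ is also a minimizer for $\lambda^{\theta,2}(B)$ (by the evenness of the integrand in $u$ together with $F\equiv 0$, the zero–mean constraint, and $\|v_\theta\|_{L^2}=1$), and then show that for $\theta$ small the two minimizers $u_\theta$ and $v_\theta$ must coincide. Anti-symmetry \eqref{antis} is exactly the assertion $u_\theta=v_\theta$. Writing $w := u_\theta - v_\theta$, I would subtract the Euler equation \eqref{euler3} (in the $U$-variable, with $p=2$, $g=0$) for $U_\theta:=\Psi_\theta(u_\theta)$ from the corresponding equation for $V_\theta:=\Psi_\theta(v_\theta)$. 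This yields, after reorganizing exactly as in Step 4 of the proof of Theorem \ref{thm_foliated_Schwarz_symmetric}, an equation of the form
$$
-\Delta (U_\theta - V_\theta) = d_\theta\,[M(V_\theta)-M(U_\theta)] + [N_0(U_\theta)-N_0(V_\theta)] + (\text{constant term from } c_\theta),
$$
where $N_0(t)=-c_\theta(1+|\Phi_\theta(t)|)^\theta$; because $g=0$ the only inhomogeneous piece is the $c_\theta$-term, which Lemma \ref{lemma_c_theta} controls by $C\theta\|u_\theta-v_\theta\|_{L^2}$.

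The heart of the argument is an energy (testing) estimate: multiply the difference equation by $U_\theta - V_\theta$ and integrate over $B$, using $\partial_\nu u_\theta=\partial_\nu v_\theta=0$. On the left we get $\|\nabla(U_\theta-V_\theta)\|_{L^2}^2$. On the right, the monotonicity of $M$ gives $d_\theta\int_B (M(V_\theta)-M(U_\theta))(U_\theta-V_\theta)\le 0$ since $d_\theta<0$ (from Step 3, $d=d_\theta\le 0$; in fact Lemma \ref{lemma_d_theta} gives $d_\theta\to -\lambda^2(B)<0$), so this term has the good sign. The term $\int_B (N_0(U_\theta)-N_0(V_\theta))(U_\theta-V_\theta)$ is bounded by $|c_\theta|\cdot(\text{Lipschitz const of } t\mapsto(1+|\Phi_\theta(t)|)^\theta)\cdot\|U_\theta-V_\theta\|_{L^1}$; using Proposition \ref{first_prop} (uniform $W^{1,\infty}$ bound on $u_\theta$, hence on $U_\theta$) the relevant Lipschitz constant is $O(\theta)$, and with Lemma \ref{lemma_c_theta} this whole term is $O(\theta^2)\|u_\theta-v_\theta\|_{L^2}^2$. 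The $c_\theta$-only term integrates against $U_\theta-V_\theta$ to something bounded by $|c_\theta|\,\|U_\theta-V_\theta\|_{L^1}=O(\theta)\cdot O(\theta\|u_\theta-v_\theta\|_{L^2})\cdot\|U_\theta-V_\theta\|_{L^1}$, again $O(\theta^2)\|u_\theta-v_\theta\|_{L^2}^2$ after using the uniform bounds. Collecting,
$$
\|\nabla(U_\theta-V_\theta)\|_{L^2}^2 \le C\theta^2\,\|u_\theta-v_\theta\|_{L^2}^2.
$$

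To close, I need to pass from $\nabla(U_\theta-V_\theta)$ back to $u_\theta-v_\theta$ and to absorb the right-hand side. Since $u_\theta=\Phi_\theta(U_\theta)$ with $\Phi_\theta$ locally Lipschitz and $U_\theta$ uniformly bounded (Proposition \ref{first_prop}), we have $\|u_\theta-v_\theta\|_{L^2}\le C\|U_\theta-V_\theta\|_{L^2}$ with $C$ independent of $\theta$. Both $U_\theta-V_\theta$ and $u_\theta-v_\theta$ have zero mean over $B$ (the mean of $u_\theta$ and $v_\theta$ vanishes; for $U_\theta-V_\theta$ one uses that the odd reflection sends $U_\theta$ to $-V_\theta$, so $\int_B(U_\theta-V_\theta)=\int_B U_\theta+\int_B U_\theta(-\cdot,\cdot)$ — more carefully one notes $V_\theta(x_1,x_2)=-U_\theta(-x_1,x_2)$, whence $\int_B V_\theta = -\int_B U_\theta$, and this need not vanish; instead subtract the mean and note the mean contributes nothing to the gradient term, so one applies Poincaré–Wirtinger to $U_\theta-V_\theta$ minus its mean and separately bounds the mean of $U_\theta-V_\theta$ by $|\int_B(U_\theta+V_\theta)|$... the cleanest route is to apply Poincaré to $u_\theta-v_\theta$ directly, which does have zero mean). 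Thus by Poincaré–Wirtinger, $\|u_\theta-v_\theta\|_{L^2}^2\le C\|\nabla(u_\theta-v_\theta)\|_{L^2}^2$, and then $\|\nabla(u_\theta-v_\theta)\|_{L^2}\le C\|\nabla(U_\theta-V_\theta)\|_{L^2}$ (again via the uniform Lipschitz bound on $\Psi_\theta'$ on bounded sets, since $\Psi_\theta'(t)=(1+|t|)^{-\theta}$ is bounded below on the uniformly bounded range of $u_\theta,v_\theta$). Combining everything gives
$$
\|u_\theta-v_\theta\|_{L^2}^2 \le C\theta^2\,\|u_\theta-v_\theta\|_{L^2}^2,
$$
so for $\theta_0$ with $C\theta_0^2<1$ we force $u_\theta=v_\theta$ for all $0<\theta<\theta_0$, i.e. \eqref{antis}. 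Uniqueness of $u_\theta$ (among Schwarz-symmetric minimizers) follows by the same testing argument applied to two arbitrary minimizers $u_\theta,\tilde u_\theta$, both foliated Schwarz symmetric w.r.t.\ the positive $x_1$-axis after rotation: the difference satisfies the same type of estimate with the $c$-difference and $d$-difference terms handled as above (using $d=d'$ from Step 3), yielding $u_\theta=\tilde u_\theta$.

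I expect the main obstacle to be the bookkeeping in the energy estimate — specifically, verifying that every error term genuinely carries a factor $\theta^2$ (not merely $\theta$) in front of $\|u_\theta-v_\theta\|_{L^2}^2$. This hinges on two ingredients working in tandem: the factor $\theta$ extracted from $c_\theta$ via Lemma \ref{lemma_c_theta}, and a second factor $\theta$ from the fact that $t\mapsto(1+|\Phi_\theta(t)|)^\theta$ has Lipschitz constant $O(\theta)$ on the uniformly bounded range provided by Proposition \ref{first_prop}. One must also be careful that the good-sign term $d_\theta\int_B(M(V_\theta)-M(U_\theta))(U_\theta-V_\theta)\le 0$ is genuinely available, which needs $d_\theta<0$; this is clear for small $\theta$ from Lemma \ref{lemma_d_theta}, but it is worth stating explicitly that $\theta_0$ may be shrunk to ensure $d_\theta<0$ as well. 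Everything else is a routine application of Poincaré–Wirtinger and the uniform $L^\infty$ bounds.
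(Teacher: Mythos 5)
Your plan runs aground at the step that is supposed to produce the contraction, because of a sign error that is in fact the crux of the whole problem. You test the subtracted Euler equation against $U_\theta-V_\theta$ and obtain
\[
\Vert\nabla(U_\theta-V_\theta)\Vert_{L^2}^2
\;=\;
d_\theta\int_B \bigl(M(V_\theta)-M(U_\theta)\bigr)(U_\theta-V_\theta)\,dx
\;+\;\text{(error terms in $c_\theta$)},
\]
and you assert that the first term on the right is $\le 0$ ``since $M$ is nondecreasing and $d_\theta<0$.'' But monotonicity of $M$ gives $(M(V_\theta)-M(U_\theta))(U_\theta-V_\theta)\le 0$, and multiplying by the negative number $d_\theta$ flips the sign, so this term is $\ge 0$, not $\le 0$. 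It therefore cannot be discarded. Worse, it is not an error term at all: since $M'(t)\to 1$ as $\theta\to 0$ and $d_\theta\to -\lambda^2(B)$ by Lemma~\ref{lemma_d_theta}, that term is $\approx \lambda^2(B)\Vert U_\theta-V_\theta\Vert_{L^2}^2$, which by Poincar\'e--Wirtinger is of the same order as $\Vert\nabla(U_\theta-V_\theta)\Vert_{L^2}^2$. The inequality you aim for,
$\Vert u_\theta-v_\theta\Vert_{L^2}^2\le C\theta^2\Vert u_\theta-v_\theta\Vert_{L^2}^2$, is simply false: after moving the $d_\theta M$ term to the left, the testing identity says that $\Vert\nabla(U_\theta-V_\theta)\Vert_{L^2}^2-|d_\theta|\,\Vert U_\theta-V_\theta\Vert_{L^2}^2$ is $O(\theta)\Vert U_\theta-V_\theta\Vert_{L^2}^2$, which is consistent with $U_\theta-V_\theta$ being nonzero. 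Your bookkeeping concern about ``$\theta^2$ vs.\ $\theta$'' is beside the point; the obstruction is qualitative, not quantitative.

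The paper's proof embraces exactly this difficulty. It normalizes $W_\theta:=(U_\theta-V_\theta)/\Vert U_\theta-V_\theta\Vert_{L^2}$ and shows via the same testing identity (with the $c_\theta$- and Lipschitz-$\theta$ terms going to zero, as you correctly estimate) that $W_\theta$ is bounded in $H^1(B)$; after extracting a weak limit $\tilde W$, one finds $\Vert\tilde W\Vert_{L^2}=1$, $\int_B\tilde W=0$, and both $\Vert\nabla\tilde W\Vert_{L^2}^2\le\lambda^2(B)$ (by weak lower semicontinuity) and $\Vert\nabla\tilde W\Vert_{L^2}^2\ge\lambda^2(B)$ (by the variational characterization), so $\tilde W$ is a first-eigenfunction of the Neumann Laplacian on $B$. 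The contradiction does not come from smallness: it comes from symmetry. From (\ref{u_theta_even}) and the definition of $v_\theta$ one checks that $W_\theta(x_1,x_2)=W_\theta(-x_1,x_2)=W_\theta(x_1,-x_2)$, hence $\tilde W$ is even in both coordinates; but every first Neumann eigenfunction of the disc is a multiple of $J_1(\alpha_{11}r/R)\cos\varphi$ or $J_1(\alpha_{11}r/R)\sin\varphi$, neither of which has this double evenness. That structural input about Bessel eigenfunctions is indispensable and is entirely absent from your argument. (The uniqueness half of the theorem is handled by a parallel compactness argument, concluding with an orthogonality contradiction $\int_B W_\theta U_\theta\,dx\to 0$ versus $\to 1$, rather than by the same symmetry obstruction.)

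In short: the idea of subtracting the Euler equations and testing is the right starting move, and your estimates on the $c_\theta$-term (via Lemma~\ref{lemma_c_theta} and Proposition~\ref{first_prop}) are in the right spirit, but the $d_\theta M$-term has the opposite sign from what you claim, so no absorption/contraction closure is available. The actual proof is necessarily a normalize-and-pass-to-the-limit argument that identifies the rescaled difference as a Neumann eigenfunction and then exploits the explicit shape of the disc's eigenfunctions.
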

\begin{proof}
We first prove that any minimizer is anti-symmetric.
Let 
$U_{\theta} := \Psi _{\theta } (u_{\theta})$ and 
$V_{\theta} :=\Psi _{\theta } (v_{\theta})$, where $v_{\theta}(x_1,x_2)=-u_{\theta}(-x_1,x_2)$.
Writing equation (\ref{eqeulermultiplied}) in terms of $U_{\theta}$ we have
$$
-\Delta U_{\theta} + c_{\theta} (1+ |u_{\theta}|)^{\theta } + d_{\theta } u_{\theta} (1+ |u_{\theta}|)^{\theta } =0\,.
$$
Similarly
$$
-\Delta V_{\theta} - c_{\theta} (1+ |v_{\theta}|)^{\theta} + d_{\theta } v_{\theta} (1+ |v_{\theta}|)^{\theta } =0.
$$
Subtract both equations from each other. Assuming that $U_{\theta}-V_{\theta}\not=0$ along a sequence $\theta \to 0$, 
we multiply by $\frac{U_{\theta}-V_{\theta}}{\Vert  U_{\theta}-V_{\theta}\Vert_{L ^2(B)}^{2}}$ and integrate.
Then we obtain
$$
\frac{\norm{\nabla (U_{\theta}-V_{\theta})} ^2_{L ^2(B)}}{\norm{U_{\theta}-V_{\theta}}^2_{L ^2(B)}} 
+
 \frac{c_{\theta}}{\Vert U_{\theta}-V_{\theta} \Vert_{L^2(B)}} \int_B [(1+ |u_{\theta}|)^{\theta} + 
 (1+ |v_{\theta}|)^{\theta}]
 \frac{U_{\theta}-V_{\theta}}{\Vert U_{\theta}-V_{\theta} \Vert_{L^2(B)}}\, dx=
$$
$$
=  - d_{\theta } \int_B [u_{\theta}) (1+ |u_{\theta}|)^{\theta } - v_{\theta} (1+ |v_{\theta})|)^{\theta}] 
\frac{U_{\theta}-V_{\theta}}{\Vert  U_{\theta}-V_{\theta} \Vert^2_{L^2(B)}} \, dx
\,.
$$
The second term of the left-hand side tends to zero, by Lemma \ref{lemma_c_theta},  and since
$
(1+ |u_{\theta}|)^{\theta} +(1+ |v_{\theta}|)^{\theta}
$ 
is uniformly bounded by Proposition \ref{first_prop}.
To estimate the  right-hand side, we first observe that $-d_{\theta } \to \lambda ^ 2 (B)$, by Lemma \ref{lemma_d_theta}. Moreover, it is not difficult to prove the following estimate:
$$
|u_{\theta} (1+ |u_{\theta}|)^{\theta } - v_{\theta} (1+ |v_{\theta}|)^{\theta }|
\leq (1+\theta)[1+(1-\theta)|\xi_\theta |]^{\frac{2\theta}{1-\theta}}
|U_\theta-V_\theta |,
$$
where $\xi_\theta$ is between $U_\theta=\Psi_{\theta } (u_\theta)$ and $V_\theta=\Psi_{\theta } (v_\theta)$. 
By Proposition \ref{first_prop}, we deduce that $(1+\theta)[1+(1-\theta)|\xi_\theta|]^{\frac{2\theta}{1-\theta}}\to 1$, as $\theta \to 0$,  uniformly in $B$.  

Now, set $W_{\theta } := \frac{U_{\theta}-V_{\theta}}{\Vert  U_{\theta}-V_{\theta}\Vert}_{L ^2(B)}$.
By the above identity, the norms $\Vert \nabla W_{\theta}\Vert_{L ^2(B)}$ are uniformly  bounded. Hence there is a function $\tilde{W}\in H^1 (B)$, such that along a subsequence,
$\nabla W_{\theta } \to \nabla \tilde{W} $ weakly in $L^2 (B)$ and  
$
W_{\theta} \to \tilde{W}$ in $L^2(B)$,
so that
$
\Vert \tilde{W} \Vert_{L ^2(B)} =1$. This also 
 implies  
$$ 
\Vert \nabla  \tilde{W} \Vert_{L ^2(B)}^2 \leq \lambda^2 (B) .
$$
Next we claim that $\displaystyle\left|\int_B W_\theta\, dx\right|\leq C\theta$. Indeed,
since $\displaystyle \int_B u_\theta\, dx= \int_B v_\theta\, dx=0$, we have
$$
\left| \int _B (U_\theta-V_\theta ) \, dx\right|
=
\int _B (\Psi _{\theta } (u_\theta) -u_\theta -\Psi _{\theta } (v_\theta)+v_\theta) \, dx
$$
$$
\leq  
\int _B \left| \int _{v_\theta} ^{u_\theta} (\Psi ' _{\theta } (t) -1) (u_\theta-v_\theta)dt \right|\, dx
\leq
\theta \int_B |u_\theta-v_\theta |\, dx\,.
$$
Now, $\Phi_{\theta }$ is locally Lipschitz continuous, uniformly in $\theta $. By Proposition \ref{first_prop}, we obtain
$$
\left|\int _B (U_{\theta }-V_{\theta } )\, dx \right|
\leq \theta \int_B |u_{\theta }-v_{\theta }|\, dx
\leq
C\theta \int_B |U_{\theta}-V_{\theta }| \, dx
\leq
C\theta \Vert U_{\theta }-V_{\theta }\Vert _{L ^2(B)}\,
$$
and the claim follows.
This and the fact that $W_{\theta} \to \tilde{W}$ in $L^2(B)$ prove that $\displaystyle \int_B \tilde{W}\, dx=0$.
Then, by definition of  $\lambda^2(B)$, we have that
$$ 
\Vert \nabla \tilde{W} \Vert_{L ^2(B)}^2 \geq  \lambda^2(B)\,.
$$
Hence $\tilde{W}$ is a (nonzero) eigenfunction for the Neumann Laplacian in $B$.
By the properties of $U_{\theta}$ and $V_{\theta}$ and by (\ref{u_theta_even}),
one has $W_{\theta}(x_1 , x_2 ) = W_{\theta}(-x_1 , x_2 ) =W_{\theta}(x_1 , -x_2 )$.
Thus the same symmetry properties hold for $\tilde{W}$. But this is in contradiction with the  shape of the eigenfunction in a ball, which is given by
$$
J_n(\alpha_{nk}|(x_1,x_2)|/R)\cdot \left\{
\begin{array}{ll}
\cos(n\varphi)\,, & l=1
\\
\sin(n\varphi)\,, & l=2 (n\neq 0)
\end{array}
\right.
,
$$
where we have used the polar coordinates, $R$ is the radius of the ball, and  $\alpha_{nk}$ are the positive roots of the derivative of the Bessel function $J_n$, (see for example
\cite{Henrot}). We thus have proved  that any minimizer is anti-symmetric.
\\
Note that the anti-symmetry also implies that $c_{\theta }=0 $, which can be seen by integrating (\ref{euler1}).

It remains to prove that the minimizer is unique for small $\theta $. Assume this is not the case. 
Therefore along a sequence  $\theta  \to 0$ along which there exist two distinct minimizers $ u_{\theta  } $ and $u_{\theta }'  $. Let the corresponding constants $d$ of (\ref{euler1}) be denoted by $d_{\theta } $ and $d_{\theta } ' $. Multiplying (\ref{euler1}) for $u_{\theta} $ with $u_{\theta } $, and integrating by parts gives 
\begin{eqnarray}
\label{dtheta} 
d_{\theta }  & = & 
- \int _B 
\frac{ |\nabla u_{\theta } |^2 }{(1+|u_{\theta } |) ^{2\theta } }\, dx+ 
\theta  
\int_B 
\frac{|\nabla u_{\theta } |^2 | u_{\theta } |}{(1+|u_{\theta }|)^{2\theta +1} } \, dx
\\
\nonumber
 & = &
 - \lambda ^{2,\theta }(B) + \theta    
 \int_B \frac{|\nabla u_{\theta } |^2 | u_{\theta } |}{(1+|u_{\theta }|)^{2\theta +1} } \, dx
.
\end{eqnarray}
Similarly
\begin{eqnarray}
\label{dthetaprime} 
d_{\theta }  ' & = &  - \lambda ^{2,\theta }(B) + \theta    
\int_B \frac{|\nabla u_{\theta } ' |^2 | u_{\theta } ' |}{(1+|u_{\theta }' |)^{2\theta +1} } \, dx
.
\end{eqnarray}
We define
$$
g_{\theta } (\xi) := \frac{|\xi |}{(1+|\xi |) ^{2\theta +1 }}  .
$$  
Since the functions $g_{\theta }$ are locally Lipschitz continuous, uniformly in $\theta $,  
we can estimate, using Proposition 1, 
\begin{eqnarray}
\label{estg}
 & & 
 \left| 
 \frac{|\nabla u_{\theta } |^2 | u_{\theta } |}{(1+|u_{\theta }|)^{2\theta +1} } 
-\frac{|\nabla u_{\theta } ' |^2 | u_{\theta } '|}{(1+|u_{\theta }' |)^{2\theta +1} } 
\right| 
\\
\nonumber 
 & = & 
 \left| 
 \frac{ |u_{\theta } |}{(1+|u_{\theta }|)^{2 \theta +1} } 
 \left( 
 |\nabla u_{\theta } |^2 - |\nabla u_{\theta } ' |^2 
 \right) 
 + |\nabla u_{\theta } ' | ^2 
 \left(  
 g_{\theta } (u_{\theta } ) -g_{\theta } (u_{\theta } ' ) 
 \right)
 \right| 
\\
\nonumber
 & \leq & C
 \left( 
 \left| 
 \nabla u_{\theta } -\nabla u_{\theta }' 
 \right| 
 +  
\left| 
u_{\theta } - u_{\theta }' 
\right| 
\right)
.
\end{eqnarray}
Subtracting (\ref{dthetaprime}) from (\ref{dtheta}) and taking into account (\ref{estg}), we obtain,
 \begin{eqnarray}
 \label{dminusdprime}
 |d_{\theta } - d_{\theta } ' |
  & \leq & 
  \theta  
  \int_B \left| \frac{|\nabla u_{\theta } |^2 | u_{\theta } |}{(1+|u_{\theta }|)^{2\theta +1} } 
-\frac{|\nabla u_{\theta } ' |^2 | u_{\theta } '|}{(1+|u_{\theta }' |)^{2\theta +1} } \right| \, dx
\\
\nonumber
 & \leq & C\theta \left( \Vert u_{\theta } -u_{\theta } ' \Vert_{L ^2(B)} + \Vert \nabla (u_{\theta } -u_{\theta } ') \Vert_{L ^2(B)} \right) .
 \end{eqnarray}
 Now we claim that 
  \begin{equation}\label{ddprime}
  |d_{\theta } - d_{\theta } ' |\le C\theta \Vert U_{\theta } -U_{\theta } ' \Vert_{L ^2(B)} \,.
 \end{equation}
As in the proof of Lemma \ref{lemma_c_theta}, by multiplying equation
 (4.3), (with $p=2$ and $g=0$), by $(1+|u_\theta|)^{\theta}$, we have
\begin{equation}\label{eqeulermultiplied2}
-\nabla ( (1+|u_{\theta} |)^{-\theta } \nabla u_{\theta }) + c_{\theta } (1+|u_\theta |)^{\theta } + d_{\theta } u_{\theta }(1+|u_{\theta } |)^{\theta }=0.
\end{equation}
 Moreover the analogous of this equality holds true for $u_{\theta}'$. Moreover by multiplying this equation by $u_\theta-u'_\theta$, we get
  \begin{eqnarray}
 \label{dd'prime1}
&&  \int_B \left( \frac{\nabla u_{\theta }  }{(1+|u_{\theta }|)^{\theta } } -
  \frac{\nabla u'_{\theta }  }{(1+|u'_{\theta }|)^{\theta } } 
 \right ) \nabla (u_\theta-u'_\theta)\, dx\\
& +& \nonumber
 d_ \theta  
 \int_B [u_{\theta}(1+|u_{\theta}|)^{\theta} - u'_{\theta}(1+|u'_{\theta}|)^{\theta} ] (u_\theta-u'_\theta)\, dx
\\
\nonumber
 & +& (d_{\theta } - d_{\theta } ' )
 \int_B [u'_{\theta}(1+|u_{\theta}|)^{\theta} (u_\theta-u'_\theta)\, dx=0\,.
 \end{eqnarray}
Now we evaluate the three integrals on the left-hand side. For value of $\theta$ small enough, we have
\begin{align}
 \label{ddprime2}
 \int_B& \left( \frac{\nabla u_{\theta }  }{(1+|u_{\theta }|)^{\theta } }-
  \frac{\nabla u'_{\theta }  }{(1+|u'_{\theta }|)^{\theta } } 
 \right ) \nabla (u_\theta-u'_\theta)\, dx\\
 \nonumber
 &=
  \int_B\frac{1}{(1+|u_\theta|)^\theta}|\nabla (u_\theta-u'_\theta)|^2\,dx+\\
 \nonumber
& +\int_B\nabla u'_\theta\cdot \nabla (u_\theta-u'_\theta)\left (  
 \frac{1}{(1+|u_\theta|)^\theta}-\frac{1}{(1+|u'_\theta|)^\theta}
   \right )\, dx\\
 \nonumber
&\ge \frac 12 \|\nabla (u_\theta-u'_\theta)\|_{{L ^2(B)}}^2-C \|\nabla (u_\theta-u'_\theta)\|_{{L ^2(B)}} \| u_\theta-u'_\theta\|_{{L ^2(B)}}\,.
  \end{align}
 Moreover, as  in the  calculation of $J$ in the previous arguments (see after \eqref{estimate_c_theta}), we get  
 \begin{equation}
 \label{dd'prime3}
\left|  d_ \theta  
 \int_B [u_{\theta}(1+|u_{\theta}|)^{\theta} - u'_{\theta}(1+|u'_{\theta}|)^{\theta} ] (u_\theta-u'_\theta)\, dx
\right|\le C\| u_\theta-u'_\theta\|_{{L ^2(B)}}^2
 \end{equation}
 \begin{equation}
 \label{ddprime4}
\int_B\nabla u'_\theta\cdot \nabla (u_\theta-u'_\theta)\left (  
 \frac{1}{(1+|u_\theta|)^\theta}-\frac{1}{(1+|u'_\theta|)^\theta}
   \right )\, dx\le C\theta \| u_\theta-u'_\theta\|_{{L ^2(B)}} (\| u_\theta-u'_\theta\|_{{L ^2(B)}}+\| \nabla (u_\theta-u'_\theta)\|_{{L ^2(B)}})\,.
 \end{equation}
Combining \eqref{ddprime2}- \eqref{ddprime4}, via Young inequality, we get
$$
\| \nabla (u_\theta-u'_\theta)\|_{{L ^2(B)}}^2\le C\|  u_\theta-u'_\theta\|^2_{{L ^2(B)}}\,.
$$
Now as in the previous calculation we get
$$
\|  u_\theta-u'_\theta\|_{{L ^2(B)}}\le \|  U_\theta-U'_\theta\|_{{L ^2(B)}}
$$
This gives \eqref{ddprime}.

 Next we define 
 $$
 h_{\theta } (\xi ) := \xi (1+ |\xi |)^{\theta } -\Psi _{\theta } (\xi ) =  \xi (1+ |\xi |)^{\theta } -\frac{\mbox{sgn } \xi }{ 1-\theta }
 \left[ (1+|\xi | )^{1-\theta } -1\right] .
 $$
 It is easy to see that $h_{\theta } $ is locally Lipschitz continuous with 
 $$
 | h' _{\theta } (\xi ) | \leq C \theta , \quad ( |\xi |\leq M  ),
 $$
 where the constant $C$ depends only on $M$, ($M>0$). 
 Using Proposition 1 we obtain from this 
 \begin{equation} 
 \label{estlast}
 \left| h_{\theta } (u_{\theta } )- h_{\theta } (u_{\theta } ' ) \right| 
 \leq 
 C\theta | u_{\theta } -u_{\theta } ' |.
 \end{equation}
 Now let
 $U_{\theta } := \Psi _{\theta } (u_{\theta } )$ and  $U_{\theta } ' := \Psi _{\theta } (u_{\theta } ' )$. 
 Arguing similarly as before, we first observe
 \begin{equation}
 \label{uversusU}
 |u_{\theta } -u_{\theta } | \leq C |\Psi _{\theta } (u_{\theta } ) - \Psi _{\theta } (u_{\theta } ' )| = 
 C |U_{\theta } - U_{\theta } ' |.
 \end{equation}
 We have by (\ref{euler1}),
\begin{eqnarray}
\label{eulerU}
 & & -\Delta U_{\theta } + d_{\theta } u_{\theta } (1+|u_{\theta } |)^{\theta } =0,
 \\
\label{eulerUprime}
 & & -\Delta U_{\theta } ' + d_{\theta } ' u_{\theta } ' (1+|u_{\theta } ' |)^{\theta } =0.
 \end{eqnarray}
 Subtracting (\ref{eulerUprime}) from (\ref{eulerU}), multiplying with $(U_{\theta } - U_{\theta } ' )$ and integrating by parts gives,
 \begin{eqnarray}
 \label{UminusUprime} 
  0 & = & 
\Vert \nabla (U_{\theta } -U_{\theta }' )\Vert _2 ^2 + d_{\theta } \int_B \left(    u_{\theta } (1+|u_{\theta } |)^{\theta } -u_{\theta }' (1+|u_{\theta } '|)^{\theta } \right) (U_{\theta } - U_{\theta } ' ) \, dx
\\
\nonumber 
 & & + (d_{\theta } -d_{\theta } ' ) \int_B u_{\theta } ' (1+|u_{\theta } ' |)^{\theta} (U_{\theta }-U_{\theta } ' ) \, dx.
\end{eqnarray}
Now define $W_{\theta } := (U_{\theta } - U_{\theta } ' ) \Vert U_{\theta } - U_{\theta } ' \Vert _{L ^2(B)} ^{-1} $. 
Then we obtain from (\ref{UminusUprime}),
 \begin{eqnarray}
 \label{Wtheta}
 0
   & = &
\Vert \nabla W_{\theta } \Vert _{L ^2(B)} ^2 + d_{\theta }\int_B \left(    u_{\theta } (1+|u_{\theta } |)^{\theta } -u_{\theta }' (1+|u_{\theta } '|)^{\theta } \right) W_{\theta } \Vert U_{\theta } - U_{\theta } ' \Vert _{L ^2(B)} ^{-1} \, dx
\\
\nonumber 
 & & 
+ (d_{\theta } -d_{\theta } ' ) \int_B u_{\theta } ' (1+|u_{\theta } ' |)^{\theta} W_{\theta } \Vert U_{\theta }-U_{\theta } ' \Vert _{L ^2(B)} ^{-1}  \, dx.
\\
\nonumber 
 & = & 
 \Vert \nabla W_{\theta } \Vert _{L ^2(B)} ^2 + d_{\theta } \Vert W_{\theta } \Vert _{L ^2(B)} ^2 
 \\
 \nonumber
  & & + d_{\theta }\int_B \left(   h_{\theta } (u_{\theta } )- h_{\theta } (u_{\theta } ')  \right) 
 W_{\theta } \Vert U_{\theta } - U_{\theta } ' \Vert _{L ^2(B)} ^{-1} \, dx
\\
\nonumber 
 & & 
+ (d_{\theta } -d_{\theta } ' ) \int_B u_{\theta } ' (1+|u_{\theta } ' |)^{\theta} W_{\theta } \Vert U_{\theta }-U_{\theta } ' \Vert _{L ^2(B)} ^{-1}  \, dx.
\end{eqnarray}
Since $\Vert W_{\theta }\Vert _{L ^2(B)} =1$,  and in view of the estimates 
(\ref{dminusdprime}) and (\ref{uversusU}), we see that the last two  terms in (\ref{Wtheta}) tend to zero as $\theta \to 0$.  Hence  the functions  $W_{\theta}$ are uniformly bounded in $H^1 (B)$. 
By passing to another subsequence if necessary, we find a function $\overline{W} \in H^1 (B)$  such that 
$W_{\theta } \to \overline{W} $ weakly in $H^1 (B)$ and $W_{\theta } \to \overline{W} $ in $L^2 (B)$. 
Then, passing to the limit in (\ref{Wtheta}) we obtain, since $\liminf\limits _{\theta \to 0} \Vert \nabla W_{\theta } \Vert _{L ^2(B)} \geq \Vert \nabla \overline{W} \Vert _{L ^2(B)} $,
\begin{equation}
\label{Wlimit1}
\Vert \nabla \overline{W} \Vert _{L ^2(B)} ^2 \leq \lambda ^2  (B) \Vert \overline{W} \Vert _{L ^2(B)} ^2 .
\end{equation}
Since also $\displaystyle \int_B \overline{W} \, dx=0$ and $\displaystyle \int_B \overline{W} ^2 \, dx=1 $, we must have equality in (\ref{Wlimit1}), and $\overline{W} $ is an anti-symmetric eigenfunction for the Neumann Laplacian in $B$, that is, $\overline{W} = u$.
In other words, we have 
$$
\int_B W_{\theta } U_{\theta }\, dx \to \int_B u^2\, dx =1, \quad \mbox{as $\theta \to 0$.}
$$
On the other hand, we calculate
\begin{eqnarray*}
\int_B W_{\theta } U_{\theta } \, dx
 & = &  \displaystyle
 \frac{\int_B U_{\theta } ^2\, dx - \int_B U_{\theta } ' U_{\theta } \, dx}{
\Vert U_{\theta } -U_{\theta } ' \Vert _{L ^2(B)}} 
\\ 
  & = &  \displaystyle \frac{1 - \int_B U_{\theta } ' U_{\theta } \, dx}{
\sqrt{2  -2 \int_B U_{\theta } ' U_{\theta }\, dx }} 
\\ 
 & = & 
\frac{1}{\sqrt{2}}{ \sqrt{ 1 -\int_B U_{\theta } ' U_{\theta } \, dx}}
\\ 
& & \to 0 , \quad \mbox{ as } \  \theta \to 0,
\end{eqnarray*}
which gives a contradiction. The proof is complete. 
\end{proof}


\section{Symmetry breaking  in dimension 2}
In this section we continue  studying the two dimensional case, assuming again that $F\equiv 0$.
We show that   for $p$ sufficiently large
the 
minimizers of $\lambda^{\theta,p}$  do not verify the properties of anti-symmetry described in the previous section;  
therefore  a phenomenon of symmetry breaking  occurs.


Let us denote by $W^{1,q}_{as}(B)$ the subset of the Sobolev space $W^{1,q}(B)$ of the functions which are anti-symmetric with respect to the plane $P\equiv \{x\in \R^{N+1}\, : x_N=0 \}$, that is,
$$
W^{1,q}_{as}(B):=\left\{ v\in  W^{1,q}(B)\, :\, u(x', -x_N)=-  u(x', x_N)\right\}\,.
$$
Let 
$$
\mathcal{F}(v)=\int_{B}\frac{|\nabla v|^2}{(1+|v|)^{2\theta}}\, dx,\,\,\,\,\,\, v\in W^{1,q}(B), \, v\neq 0 \\, \int_{B}v\, dx=0, \, \norm{v}_{L^2(B)}=1\,.
$$
Recall that  $\theta$ satisfies \eqref{teta} and $q$ satisfies \eqref{q2}.
Let
$$
\lambda^{\theta,p}(B):=\hbox{inf} 
\left\{
\mathcal{F}(v) ,\,\, v\in W^{1,q}(B), \, v\neq 0 \\, \int_{B}v\, dx=0, \, \norm{v}_{L^p(B)}=1 
\right\}
$$
and 
$$
{\lambda}^{\theta,p}_{as}(B):=
\hbox{inf} 
\left\{
\mathcal{F}(v),\,\, v\in W^{1,q}_{as}(B), \, v\neq 0 \\, \int_{B}v\, dx=0, \, \norm{v}_{L^p(B)}=1 
\right\}\, .
$$
Observe that the existence of a function realizing ${\lambda}^{\theta,p}_{as}(B)$
can be proved analogously as in Theorem \ref{existence}.
%
%
%
%

\noindent Let us also recall  a well-known result. For any bounded smooth domain $\Omega\subset \R^2 $, 
let 
$$
{\Lambda}^{p}_{as}(\Omega)=\hbox{inf} 
\left\{
\norm{\nabla v}^2_{L^2(\Omega)},\,\, v\in W^{1,2}_{as}(\Omega), \, v\neq 0 \\, \int_{\Omega}v\, dx=0, \, \norm{v}_{L^p(\Omega)}=1 
\right\}\,.
$$
In \cite{Girao-Weth}  the  behaviour of ${\Lambda}^{p}_{as}(\Omega)$ is studied and it is proved that 
\begin{equation}\label{lemma_girao-weth-ell}
{\Lambda}^{p}_{as}(\Omega)\to 0, \qquad \hbox{as } p\to \infty.
\end{equation}
It is easy to prove the same result for ${\lambda}^{\theta,p}_{as}(B)$:
\begin{proposition}\label{lemmaantisimm}
We have ${\lambda}^{\theta,p}_{as}(B)\to 0$, as $p\to \infty$.
\end{proposition}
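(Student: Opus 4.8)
The plan is to bound $\lambda^{\theta,p}_{as}(B)$ from above by the already understood quantity $\Lambda^p_{as}(B)$, exploiting that the weight $(1+|v|)^{-2\theta}$ never exceeds $1$, and then to invoke \eqref{lemma_girao-weth-ell}.

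First I would observe that, since $B$ is bounded and $q<2$ (see \eqref{q2}), there is a continuous inclusion $W^{1,2}_{as}(B)\subset W^{1,q}_{as}(B)$. Consequently every $v\in W^{1,2}_{as}(B)$ with $v\neq 0$ and $\norm{v}_{L^p(B)}=1$ is admissible in the infimum defining $\lambda^{\theta,p}_{as}(B)$: the constraint $\int_B v\,dx=0$ is automatic for anti-symmetric functions, and the $L^p$-normalisation is meaningful because in dimension two $W^{1,2}(B)\hookrightarrow L^p(B)$ for every finite $p$, so the class of such $v$ is non-empty (for instance a suitable multiple of $(x_1,x_2)\mapsto x_1$).

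Next, for any such $v$ the elementary pointwise bound $(1+|v|)^{-2\theta}\le 1$ gives
$$
\mathcal{F}(v)=\int_B \frac{|\nabla v|^2}{(1+|v|)^{2\theta}}\, dx \le \int_B |\nabla v|^2\, dx=\norm{\nabla v}^2_{L^2(B)}\,.
$$
Taking the infimum over all admissible $v\in W^{1,2}_{as}(B)$ then yields $\lambda^{\theta,p}_{as}(B)\le \Lambda^p_{as}(B)$. Combining this inequality with \eqref{lemma_girao-weth-ell} applied to $\Omega=B$, that is $\Lambda^p_{as}(B)\to 0$ as $p\to\infty$, and with the trivial lower bound $\lambda^{\theta,p}_{as}(B)\ge 0$, the claim follows at once.

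There is essentially no obstacle here: the whole argument is a one-line monotonicity comparison. The only point that deserves a word is the well-posedness of the admissible classes (and finiteness of the functional on them) for arbitrarily large $p$; but this is exactly the situation already treated in the proof of Theorem \ref{existence}, where finiteness of $\mathcal{F}(v)$ forces $\Psi_\theta(v)\in W^{1,2}(B)$ and hence, since $\Psi_\theta(\xi)$ grows like $|\xi|^{1-\theta}$, $v\in L^r(B)$ for every $r<\infty$.
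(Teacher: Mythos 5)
Your proof is correct and follows essentially the same route as the paper's: the pointwise bound $(1+|v|)^{-2\theta}\le 1$ gives $\mathcal{F}(v)\le\norm{\nabla v}_{L^2(B)}^2$, the inclusion $W^{1,2}_{as}(B)\subset W^{1,q}_{as}(B)$ (since $q<2$) enlarges the admissible class, and hence $\lambda^{\theta,p}_{as}(B)\le\Lambda^p_{as}(B)\to 0$ by \eqref{lemma_girao-weth-ell}. The paper presents exactly this chain of inequalities; your extra remarks on admissibility and finiteness are correct but not needed beyond what the paper already leaves implicit.
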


\begin{proof}
Since $\displaystyle \int_B |\nabla v|^2\, dx\geq \mathcal{F}(v)$, one has
$$
\begin{array}{ccl}
\displaystyle
{\Lambda}^{p}_{as}(B) & \geq & 
\displaystyle
\hbox{inf} \left\{
\mathcal{F}(v),\,\, v\in W^{1,2}_{as}(B), \, v\neq 0 \, \int_{B}v\, dx=0, \, \norm{v}_{L^p(B)}=1 
\right\}
\\
\displaystyle&\geq & \displaystyle
\hbox{inf} \left\{
\mathcal{F}(v),\,\, v\in W^{1,q}_{as}(B), \, v\neq 0 \, \int_{B}v\, dx=0, \, \norm{v}_{L^p(B)}=1 
\right\}
\\
\displaystyle
& =& {\lambda}^{\theta,p}_{as}(B)
\end{array}
$$
By \eqref{lemma_girao-weth-ell} the conclusion follows.
\end{proof}

Now we can prove the main result of the section.
\begin{theorem}
For  $p$ sufficiently large, $\lambda^{\theta,p}(B)<{\lambda}^{\theta,p}_{as}(B)$. Therefore the minimizers of $\mathcal{F}$ are not anti-symmetric for $p$ sufficiently large.
\end{theorem}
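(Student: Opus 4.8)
The inequality $\lambda^{\theta,p}(B)\le {\lambda}^{\theta,p}_{as}(B)$ is immediate, since $W^{1,q}_{as}(B)$ is a subclass of the admissible functions; the whole point is to make it \emph{strict} for $p$ large. The plan is to build, for $p$ large, a competitor for $\lambda^{\theta,p}(B)$ whose energy is essentially one half of ${\lambda}^{\theta,p}_{as}(B)$. Let $u_p$ be a minimizer for ${\lambda}^{\theta,p}_{as}(B)$ (its existence follows as in Theorem \ref{existence}), anti-symmetric with respect to $\{x_2=0\}$, and write $B^+:=B\cap\{x_2>0\}$ and $u_p^+:=\max(u_p,0)$, which is supported in $\overline{B^+}$. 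Since the integrand $|\nabla u_p|^2(1+|u_p|)^{-2\theta}$ is invariant under the reflection $(x_1,x_2)\mapsto(x_1,-x_2)$, one gets $\int_{B^+}|\nabla u_p|^2(1+|u_p|)^{-2\theta}\,dx=\tfrac12\,{\lambda}^{\theta,p}_{as}(B)$ and $\int_B|u_p^+|^p\,dx=\tfrac12$.

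The key auxiliary fact, which I expect to be the main obstacle, is a \emph{concentration lemma}: $\norm{u_p}_{L^1(B)}\to 0$ as $p\to+\infty$. To prove it I would note that $\int_B|\nabla\Psi(u_p)|^2\,dx={\lambda}^{\theta,p}_{as}(B)\to 0$ by Proposition \ref{lemmaantisimm} (with $\Psi$ as in \eqref{defnPsi}), while $\Psi(u_p)$ is bounded in $L^2(B)$ uniformly in $p$: indeed $|\Psi(t)|\le (1-\theta)^{-1}(1+|t|)^{1-\theta}$, $0<2(1-\theta)<2<p$ for $p$ large, and $\norm{u_p}_{L^p}=1$ give $\norm{\Psi(u_p)}_{L^2(B)}\le C$. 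Hence $\Psi(u_p)$ is bounded in $H^1(B)$ with gradient going to $0$ in $L^2$, so along any subsequence it converges, weakly in $H^1(B)$ and — by the two–dimensional compact Sobolev embeddings — strongly in every $L^r(B)$, $r<+\infty$, to a constant $c$. Because $\Phi:=\Psi^{-1}$ grows like $|\eta|^{1/(1-\theta)}$ with $1/(1-\theta)<2$, the functions $u_p=\Phi(\Psi(u_p))$ are equi-integrable and converge a.e. to $\Phi(c)$, hence in $L^1(B)$; the constraint $\int_B u_p\,dx=0$ then forces $\Phi(c)=0$, i.e. $c=0$. So $u_p\to 0$ in $L^1(B)$.

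Now set $c_p:=|B|^{-1}\int_B u_p^+\,dx=(2|B|)^{-1}\norm{u_p}_{L^1(B)}\to 0$ and $w_p:=u_p^+-c_p$. Then $w_p\in W^{1,q}(B)$, $w_p\neq 0$, $\int_B w_p\,dx=0$, and $\norm{w_p}_{L^p(B)}\to 1$ (triangle inequality, together with $\norm{u_p^+}_{L^p}=2^{-1/p}\to 1$ and $c_p\to 0$). Since $\nabla w_p=\nabla u_p^+$, a pointwise comparison of the weights — the elementary inequality $1+u_p^+\le (1+2c_p)(1+|u_p^+-c_p|)$, valid once $2c_p<1$ — yields
$$
\mathcal F(w_p)\le (1+2c_p)^{2\theta}\int_B|\nabla u_p^+|^2(1+u_p^+)^{-2\theta}\,dx\le (1+2c_p)^{2\theta}\cdot\tfrac12\,{\lambda}^{\theta,p}_{as}(B).
$$
Finally, with $\widehat w_p:=w_p/\norm{w_p}_{L^p(B)}$ and $s_p:=\norm{w_p}_{L^p(B)}\to 1$, the bound $(1+|v|/s)^{-2\theta}\le\max(1,s^{2\theta})(1+|v|)^{-2\theta}$ gives $\mathcal F(v/s)\le\max(1,s^{-2})\,\mathcal F(v)$, hence $\mathcal F(\widehat w_p)\le(1+o(1))\,\mathcal F(w_p)$. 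Collecting everything,
$$
\lambda^{\theta,p}(B)\le\mathcal F(\widehat w_p)\le\Big(\tfrac12+o(1)\Big)\,{\lambda}^{\theta,p}_{as}(B)<{\lambda}^{\theta,p}_{as}(B)
$$
for all $p$ large enough. Consequently, no minimizer $u$ of $\lambda^{\theta,p}(B)$ can be anti-symmetric with respect to a hyperplane through the origin: after a rotation such a $u$ would lie in $W^{1,q}_{as}(B)$ and give ${\lambda}^{\theta,p}_{as}(B)\le\mathcal F(u)=\lambda^{\theta,p}(B)$, a contradiction. The delicate points are the uniform-in-$p$ bounds used in the concentration lemma, and checking that subtracting the mean $c_p$ and renormalising affect $\mathcal F$ only by a multiplicative factor $1+o(1)$, despite the non-homogeneity of the functional.
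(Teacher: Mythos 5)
Your overall strategy coincides with the paper's: take the mass of an antisymmetric $\lambda^{\theta,p}_{as}(B)$-minimizer on ``half'' of the ball, correct the zero-mean constraint, renormalise, and check that the gain of a factor $\tfrac12$ in the energy survives the (small) perturbations caused by the correction and the renormalisation. The paper implements the first step by the truncation $\overline u_p := v_p\mathbf{1}_{B_+}$, proves the concentration $\int_B\overline u_p\,dx\to 0$ via a Poincar\'e inequality exploiting the vanishing of $\overline u_p$ on $B\setminus B_+$, and tracks the renormalisation errors with explicit $\varepsilon$--bookkeeping leading to the constant $M_\varepsilon=\tfrac{1+\varepsilon}{1-\varepsilon}$. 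You instead use the positive part $u_p^+$, prove the concentration by a compactness and uniqueness-of-the-limit argument for $\Psi(u_p)$, and track the renormalisation with the factors $(1+2c_p)^{2\theta}$ and $\max(1,s_p^{-2})$. All three of these variants are valid.

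There is, however, one incorrect intermediate assertion that should be repaired: you claim that $u_p^+=\max(u_p,0)$ is supported in $\overline{B^+}$ and appeal to reflection invariance of the integrand over $B^+$ to conclude. For a merely antisymmetric $u_p$ this is false --- antisymmetry only says $u_p(x_1,-x_2)=-u_p(x_1,x_2)$, not that $u_p\ge 0$ on $\{x_2>0\}$ --- so the sets $\{u_p>0\}$ and $B^+$ need not coincide. The identities you actually use, namely $\int_B|\nabla u_p^+|^2(1+u_p^+)^{-2\theta}\,dx=\tfrac12\lambda^{\theta,p}_{as}(B)$ and $\|u_p^+\|_{L^p(B)}^p=\tfrac12$, are nevertheless true, but for a different reason: the reflection maps $u_p^+$ to $u_p^-$, and the supports of $\nabla u_p^+$ and $\nabla u_p^-$ are a.e.\ disjoint, so the energy splits equally between the positive and negative parts (and likewise the $L^p$-mass). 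Either supply that argument, or do as the paper does and replace $u_p^+$ by the genuine restriction $v_p\mathbf{1}_{B_+}$, for which the reflection-invariance argument is exactly right. With that fix the rest of your proof --- in particular the concentration lemma, the inequality $1+u_p^+\le(1+2c_p)(1+|u_p^+-c_p|)$, and the bound $\mathcal F(v/s)\le\max(1,s^{-2})\,\mathcal F(v)$ --- goes through and yields $\lambda^{\theta,p}(B)\le(\tfrac12+o(1))\lambda^{\theta,p}_{as}(B)$ as desired.
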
\label{antis2}
\begin{proof}
Let $v_p$ be an eigenfunction for ${\lambda}^{\theta,p}_{as}(B)$. Hence 
$\norm{v_p}_{L^p(B)}=1$.
Let $B_+=\{(x_1,x_2)\in B: x_2>0 \}$, and 
let $\overline{u}_p$ be defined by 
$$
\overline{u}_p(x)=
\left\{
\begin{array}{ll}
v_p(x),& x\in B_+,
\\
0, & x\in B\setminus B_+\,.
\end{array}
\right.
$$
Then 
\begin{equation}\label{lastestimate}
\int_{B}\frac{|\nabla \overline{u}_p|^2}{(1+|\overline{u}_p|)^{2\theta}}\, dx=\frac{{\lambda}^{\theta,p}_{as}(B)}{2}\,,\qquad \qquad \norm{\overline{u}_p}_{L^p(B)}^p=\frac {1}{2}. 
\end{equation}
We claim that 
\begin{equation}\label{mean}
 \int_B \overline{u}_p\, dx\to 0, \qquad \hbox{as $p \rightarrow \infty$}. 
\end{equation}

\noindent By Proposition \ref{lemmaantisimm}, we deduce that
$$
{\lambda}^{\theta,p}_{as}(\Omega)=2\|\nabla \Psi(\overline{u}_p)\|_{L^2(B)}\to 0, \qquad \hbox{as }p\to \infty\,,
$$
where $\Psi$ has been defined in (\ref{defnPsi}).
Since  $\overline{u}_p=0$ in $B\setminus B_+$, we can use  
Poincar\'e-Wirtinger inequality which implies  
$$
\|\Psi(\overline{u}_p)\|_{L^2(B)}\to 0,\qquad \hbox{as }p\to \infty\,.
$$ 
Therefore, up to subsequence,  $\Psi(\overline{u}_{p})\to 0$ and $u_p\to 0$ a.e. in $B$.
On the other hand, there exists a function $h \in L^2(B)$ such that $|\Psi(\overline{u}_p)|\leq h$ a.e. in $B$. By definition of $\Psi(t)$, we deduce the existence of a function $k\in L^{2(1-\theta)}(B)$ such that
$|\overline{u}_p|\leq k$ a.e. in $B$. Hence Lebesgue's theorem applies and we get  $\displaystyle \int_B |\overline{u}_p|\, dx\to 0$. This proves \eqref{mean}.

Next we define 
$$
\tilde{u}_p:=\frac{\overline{u}_p-\frac{1}{|B|}\displaystyle\int_B \overline{u}_p\, dx}{ \left\Vert \overline{u}_p-\displaystyle\frac{1}{|B|}\int_B \overline{u}_p\, dx \right\Vert _{L^p(B)}}\,.
$$
Therefore
$$
\lambda^{\theta,p}(B)
\leq \int_{B}\frac{|\nabla \tilde{u}_p|^2}{(1+|\tilde{u}_p|)^{2\theta}}\, dx=
\frac{1}{\Big\|\overline{u}_p-\frac{1}{|B|}\displaystyle\int_B \overline{u}_p\, dx \Big\|_{L^p(B)}^{2}}
\displaystyle\int_B\frac{|\nabla \overline{u}_p|^2}{\left(1+\frac{|\overline{u}_p - \frac{1}{|B|} \int_B \overline{u}_p\, dx|}{\|\overline{u}_p - \frac{1}{|B|}\int_B \overline{u}_p \, dx \|_{L^p(B)}}\right)^{2\theta}}\, dx\,.
$$
Let $\varepsilon>0$ be sufficiently small.
For a suitable $p(\varepsilon)>0$ and for any $p>p(\varepsilon)$, one has, by (\ref{lastestimate})
\begin{equation}\label{estimatebeforeG}
\lambda^{\theta,p}(B)
\leq\frac{1}{\left[(\frac 12)^{\frac 1p}-\varepsilon\right]^{2}}
\int_B\frac{|\nabla \overline{u}_p|^2}{\left[1+\frac{|\overline{u}_p - \frac{1}{|B|}\int_B \overline{u}_p\, dx|}{(\frac 12)^{\frac 1p}+\varepsilon}\right]^{2\theta}}dx.
\end{equation}
Let us set $\displaystyle M_\varepsilon=\frac{1+\varepsilon}{1-\varepsilon}$.

\noindent We claim that
\begin{equation}\label{G}
\displaystyle G(\overline{u}_p)\equiv\frac{1+|\overline{u}_p|}{1+\frac{|\overline{u}_p-\frac{1}{|B|}\int_B \overline{u}_pdx|}{(1/2)^{\frac 1p}+\varepsilon}}\leq M_\varepsilon\,.
\end{equation}
First of all, it is easy to verifies that, for any $p>p(\varepsilon)$,
\begin{equation}\label{first}
1+\left|\overline{u}_p-\frac{1}{|B|}\int_B \overline{u}_pdx\right|
\geq
1+\left||\overline{u}_p|-\frac{1}{|B|}\left|\int_B \overline{u}_pdx\right|\right|
\geq 
1+||\overline{u}_p|-\varepsilon|
\geq
(1-\varepsilon)(1+|\overline{u}_p|)\,.
\end{equation}
Now we distinguish two cases.
\begin{enumerate}
\item
If $(\frac 12)^{\frac 1p}+\varepsilon\leq 1$, then 
$G(\overline{u}_p)\leq \frac{1+|\overline{u}_p|}{1+{|\overline{u}_p-\frac{1}{|B|}\int_B \overline{u}_pdx|}}$. By (\ref{first}) one has
$G(\overline{u}_p)\leq \frac{1}{1-\varepsilon}$.
\item
If $(\frac 12)^{\frac 1p}+\varepsilon>1$, then  by (\ref{first}),
$$1+\frac{|\overline{u}_p-\frac{1}{|B|}\int_B \overline{u}_pdx|}{(\frac 12)^{\frac 1p}+\varepsilon}
\geq 
\frac{1+|\overline{u}_p-\frac{1}{|B|}\int_B \overline{u}_pdx |}{(\frac 12)^{\frac 1p}+\varepsilon}
\geq 
\frac{1-\varepsilon}{(\frac 12)^{\frac 1p}+\varepsilon}(1+|\overline{u}_p|)\,.
$$
\end{enumerate}
Therefore \eqref{G} is proved, that is, 
\begin{equation}\label{estimateG}
\frac{1}{1+\frac{|\overline{u}_p-\frac{1}{|B|}\int_B \overline{u}_pdx|}{(1/2)^{\frac 1p}+\varepsilon}}
\leq \frac{M_\varepsilon}{1+|\overline{u}_p|}\,.
\end{equation}
Combining estimates  (\ref{estimatebeforeG}) and (\ref{estimateG}) we get
$$
\lambda^{\theta,p}(B)
\leq
\frac{M_\varepsilon^{2\theta}}{\left[(\frac 12)^{\frac 1p}-\varepsilon\right]^{2}}
\int_B
\frac{|\nabla \overline{u}_p|^2}{(1+|\overline{u}_p|)^{2\theta}}dx\,.
$$
It is clear that
$\displaystyle\frac{M_\varepsilon^{2\theta}}{\left[(\frac 12)^{\frac 1p}-\varepsilon\right]^{2}}
=\left(\frac{1+\varepsilon}{1-\varepsilon}\right)^{2\theta}\frac{1}{\left[(\frac 12)^{\frac 1p}-\varepsilon\right]^{2}}<2$
for $p>p(\varepsilon)$.
Therefore, for $p$ sufficiently large, one has
$$
\lambda^{\theta,p}(B)
<
2
\int_B
\frac{|\nabla \overline{u}_p|^2}{(1+|\overline{u}_p|)^{2\theta}}dx
={\lambda}^{\theta,p}_{as}(B)
$$
by (\ref{lastestimate}).
\end{proof}


\noindent{\bf Acknowledgements.} 
This work started during a visit of A. Mercaldo to Universit\'e de Rouen which was 
financed by the F\'ed\'eration Normandie Math\'ematiques. The last author
 is a member of Gruppo Nazionale per
l'Analisi Matematica, la Probabilit\`a e le loro Applicazioni (GNAMPA)
of the Istituto Nazionale di Alta Matematica (INdAM)
which supported visitings
of F. Brock  to Universit\`a degli Studi di Napoli Federico II.
All these institutions are gratefully acknowledged.

\end{document}